\documentclass[11pt, a4paper]{article}
\usepackage{indentfirst}
\usepackage{amsfonts}
\usepackage{amssymb}
\usepackage{mathrsfs}
\usepackage{amsmath}
\usepackage{amsthm}
\usepackage{enumerate}
\usepackage{cite}
\usepackage{mathrsfs}
\allowdisplaybreaks
\usepackage{geometry}
\usepackage{ifpdf}
\ifpdf
\usepackage[colorlinks=true, linkcolor=blue, citecolor=red, final, backref=page, hyperindex]{hyperref}
\else
\usepackage[colorlinks, final, backref=page, hyperindex, hypertex]{hyperref}
\fi

\usepackage{txfonts}
\usepackage{indentfirst}
\usepackage{latexsym}
\usepackage[all]{xy}



\hoffset=0cm
\oddsidemargin=0pt
\marginparsep=0cm
\marginparwidth=0cm
\voffset=- 1.5cm
\textheight=23cm
\textwidth=16cm
\def\<{\langle}
\def\>{\rangle}


\newtheorem{thm}{Theorem}[section]

\newtheorem{prop}[thm]{Proposition}
\newtheorem{ex}[thm]{Example}

\theoremstyle{definition}
\newtheorem{defn}{Definition}[section]

\theoremstyle{remark}
\newtheorem{re}{Remark}[section]
\begin{document}
	\title{\bf Cohomologies and deformations of weighted Rota-Baxter Lie algebras and associative algebras with derivations }
	\author{\bf  Basdouri Imed, Sadraoui Mohamed Amin, Shuangjian Guo}
	\author{{
			\  Basdouri Imed $^{1}$
			\footnote { Corresponding author, E-mail: basdourimed@yahoo. fr},
			\ Sadraoui Mohamed Amin $^{2}$
			\footnote { Corresponding author, E-mail: aminsadrawi@gmail.com}, 
			\  Shuangjian Guo $^{3}$
			\footnote { Corresponding author, E-mail: shuangjianguo@126.com}
		}\\
		\\
		{\small 1. University of Gafsa, Faculty of Sciences Gafsa, 2112 Gafsa, Tunisia} \\
		{\small 2. University of Sfax, Faculty of Sciences Sfax, BP
			1171, 3038 Sfax, Tunisia} \\
		{\small 3. School of Mathematics and statistics,Guizhou University of Finance and Economics, Guiyang, 550025} 
	}
	\date{}
	\maketitle
	\begin{abstract}
		The purpose of the present paper is to investigate cohomologies and deformations of weighted Rota-Baxter Lie algebras as well as weighted Rota-Baxter associative algebras with derivations. First we introduce a notion of weighted Rota-Baxter LieDer and weighted Rota-Baxter AssDer pairs. Then we construct cohomologies of weighted Rota-Baxter LieDer pairs, weighted Rota-Baxter AssDer pairs and we discuss the relation between their cohmologies. Finally, as an application, we study deformations of both of them.
	\end{abstract}
	\textbf{Key words:}\ Lie algebras, Lie algebras with derivation, Rota-Baxter operators, cohomology, deformation . \\

	\numberwithin{equation}{section}
	
	\tableofcontents
	\section{Introduction}
	 Rota-Baxter operators were first studied in the work of Baxter of the fluctuation theory in probability \cite{G0} and further was developed in \cite{G1}. These operators can be regarded as an algebraic abstraction representing the integral operator. Many papers have been devoted to various aspects of Rota-Baxter operators in many mathematical fields, like combinatorics \cite{N0}, renormalization in quantum field theory \cite{A0}, multiple zeta values in number theory \cite{L0}, Yang Baxter equations \cite{C0}, algebraic operad \cite{M0} and other papers. Rota-Baxter operators with arbitrary weight (also called weighted Rota-Baxter operators) was considered in \cite{C1,C2,J0}. \\
	 Deformation theory of some algebraic structure goes back to Gerstenhaber \cite{M1} for associative algebras
	 and Nijenhuis-Richardson \cite{A1} for Lie algebras. There are some advancements in deformation theory and cohomology theory of weighted Rota-Baxter algebras \cite{A2,K0}. More precisely, they considered weighted Rota-Baxter Lie algebras and associative algebras and define cohomology of them with coefficients in arbitrary Rota-Baxter representation.\\
	 Derivations have an important role to study many algebraic structures. Homotopy Lie algebras \cite{T0} and differential Galois theory \cite{A3} can be gained from derivations. Derivations also are important in control theory and gauge theories in quantum field theory \cite{V0,V1}. Recently, the cohomologies, extensions and deformations of Lie algebras with derivations (called LieDer pairs) were investigated in \cite{R0}. Then had been extended to associative algebras, Leibniz algebras, Lie triple systems, n-Lie algebras and compatible Lie algebras with derivations  in \cite{A4,A5,Q0,Q1,Q2,B0}. Also derivations play new role in twisting associative and nonassociative algebras to obtain InvDer algebraic structure, for more details see \cite{Bas} \\
	 Motivated by these works, we are generalizing the structre of Lie algebras with derivation \cite{R0} (respectively associative algebras with derivation \cite{A5}) and the structure of weighted Rota-Baxter Lie algebras \cite{A2} (respectively Rota-Baxter associative algebras \cite{K0}) to the cohomologies of weighted Rota-Baxter LieDer and AssDer pairs.\\
	 The paper is organized as follows. In section \ref{section 2}, we introduce a notion of a weighted Rota-Baxter LieDer pair and its representation. In section  \ref{section3}, we study cohomologies of weighted Rota-Baxter LieDer pairs and weighted Rota-baxter AssDer pairs. In section \ref{section 4}, we study deformation of two structures.
	 \section{Weighted Rota-Baxter LieDer pairs}\label{section 2}
	 Let $\mathfrak{L}=(L,[\cdot,\cdot])$ be a Lie algebra. A linear map $R:L\rightarrow L$ is said to be a $\lambda$-weighted Rota-Baxter operator if $R$ satisfies, for $\lambda \in \mathbb{K}$ 
	 	\begin{equation}\label{RBO eq1}
	 	[Rx,Ry]=R([Rx,y]+[x,Ry]+\lambda [x,y]),\quad \forall x,y \in L.
	 \end{equation}
	According to the previous operator, in this section we introduce the notion of $\lambda$-weighted Rota-Baxter LieDer pairs (or simply weighted Rota-Baxter LieDer pairs if there is no confusion) which is a LieDer pair $(\mathfrak{L},\delta)$ equipped with a $\lambda$-weighted Rota-Baxter operator. \\
This notion is a generalization of weighted Rota-Baxter Lie algebra $(\mathfrak{L},R)$, see \cite{L1,A2,A6,S0} for more details,  consisting of a Lie algebra $\mathfrak{L}$ together with a $\lambda$-weighted Rota-Baxter operator on it.
\subsection{Lie algebras equipped with a couple of  derivations}
In this subsection we study the structure of a Lie algebras $\mathfrak{L}$ equipped with a couple of  derivations $\delta_1,\delta_2$ and we investigate its relation with the LieDer pairs. Recall first the definition of LieDer pair.
\begin{defn}\cite{R0}
	A LieDer pair $(\mathfrak{L},\delta)$ is a Lie algebra $\mathfrak{L}$ equipped with a derivation $\delta$.
\end{defn}
So a Lie algebra equipped with a derivation leads to construct a LieDer pair. Next we combine a Lie algebra $\mathfrak{L}$ with two derivations $\delta_1$ and $\delta_2$ to construct a LieDer pair structure with an additionally condition.
\begin{re}
	Let $\mathfrak{L}$ be a Lie algebra and $\delta_1,\delta_2:L\rightarrow L$ be two derivations on $\mathfrak{L}$. Then $\delta_1\circ\delta_2$ is a derivation on $\mathfrak{L}$ if and only if for all $x,y\in L$ the following condition holds
	\begin{equation}\label{condition biderivation}
		[\delta_1x,\delta_2y]=[\delta_1y,\delta_2x]
	\end{equation} 
\end{re}
\begin{defn}
	Let $\mathfrak{L}$ be a Lie algebra, $\delta_1$ and $\delta_2$ be two derivations on it. Then $(\mathfrak{L},\delta_1,\delta_2)$ is called a \textbf{Lie-BiDer pair} if $(\mathfrak{L},\delta_1\circ\delta_2)$ is a LieDer pair.
\end{defn}
The previous definition means that $(\mathfrak{L},\delta_1,\delta_2)$ is a Lie-BiDer pair if and only if the condition \eqref{condition biderivation} is satisfied.
\begin{ex}
Let $L$ be two dimentional Lie algebra such that $[e_1,e_2]=e_2$. Then $(\mathfrak{L},\delta_1,\delta_2)$ is a Lie-BiDer pair where, for all $a,b,c,d\in \mathbb{K}$ 
		\begin{equation*}
			\delta_1=\begin{pmatrix}
				0 & 0 \\
				a & b 
			\end{pmatrix}\quad 
		\text{ and }\quad
		\delta_2=\begin{pmatrix}
			0 & 0 \\
			c & d 
		\end{pmatrix}	\quad 
	\text{ with }\quad ad=bc
		\end{equation*}
\end{ex}
\begin{defn}
	Let $(\mathfrak{L},\delta_1,\delta_2)$ and $(\mathfrak{K},\partial_1,\partial_2)$ be two Lie-BiDer pairs. A Lie-BiDer homomorphism $f:\mathfrak{L} \rightarrow \mathfrak{K}$ is a Lie algebra homomorphism from $\mathfrak{L}$ to $\mathfrak{K}$ such that 
	\begin{equation}\label{morphism Lie-BiDer pair}
		f\circ\delta_1=\partial_1\circ f,\quad f\circ\delta_2=\partial_2\circ f
	\end{equation} 
\end{defn}
\begin{defn}
	Let $(\mathfrak{L},\delta_1,\delta_2)$ be a Lie-BiDer pair. A representation of it on a vector space $V$ with respect to $(\varphi_1,\varphi_2)\in  \mathrm{gl}(V)$ is a Lie algebra morphism $\rho:L\rightarrow \mathrm{gl}(V)$ such that, for $x\in L$, we have 
	\begin{equation}\label{representation Lie-BiDer pair}
		\rho(\delta_1x)\circ\varphi_2=-\rho(\delta_2x)\circ\varphi_1.
	\end{equation}
Such a representation is denoted by $(\mathcal{V}=(V,\rho),\varphi_1,\varphi_2)$.
\end{defn}
\begin{ex}
	Let $\mathfrak{L}$ be a Lie algebra and a map $\mathrm{ad}_x:L \rightarrow L$ defined by 
	\begin{equation*}
		\mathrm{ad}_x(y)=[x,y],\quad \forall y\in L.
	\end{equation*}
	Then $(L,\mathrm{ad},\delta_1,\delta_2)$ is a representation of the Lie-BiDer pair $(L,\delta_1,\delta_2)$ on $L$ with respect to $\delta_1$ and $\delta_2$, it is called the adjoint representation.
\end{ex}
\begin{prop}\label{semidirect BiDer} Let $(\mathfrak{L},\delta_1,\delta_2)$ be a Lie-BiDer pair and $(\mathcal{V},\varphi_1,\varphi_2)$ be a representation of it. Then $(L\oplus V,\delta_1\oplus\varphi_1,\delta_2\oplus\varphi_2)$ is a Lie-BiDer pair with the following Lie structure 
	\begin{equation*}
		[x+a,y+b]_{\ltimes}=[x,y]+\rho(x)b-\rho(y)a;\quad x,y \in L \text{ and }a,b \in V.
	\end{equation*}
and the maps $\delta_\mathrm{i}\oplus\varphi_\mathrm{i}$	are given by 
\begin{equation*}
	\delta_\mathrm{i}\oplus\varphi_\mathrm{i}(x+a):=\delta_\mathrm{i}x+\varphi_\mathrm{i}(a),\quad \forall i\in \{1,2\}.
\end{equation*}

Such a Lie-BiDer pair is called the \textbf{semi-direct product} of $(\mathfrak{L},\delta_1,\delta_2)$ by a representation representation $(\mathcal{V},\varphi_1,\varphi_2)$ and it is denoted by $L\ltimes_{\textbf{BiDer}} V$.
\end{prop}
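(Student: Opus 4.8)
The plan is to verify, in order, the three things required: that $[\cdot,\cdot]_{\ltimes}$ is a Lie bracket on $L\oplus V$, that each $\delta_i\oplus\varphi_i$ is a derivation of this bracket, and that $(\delta_1\oplus\varphi_1)\circ(\delta_2\oplus\varphi_2)$ is again a derivation. By the Remark preceding Definition of a Lie-BiDer pair, the last point is equivalent to the identity $[(\delta_1\oplus\varphi_1)(X),(\delta_2\oplus\varphi_2)(Y)]_{\ltimes}=[(\delta_1\oplus\varphi_1)(Y),(\delta_2\oplus\varphi_2)(X)]_{\ltimes}$ for all $X,Y\in L\oplus V$; combined with the second point this shows $(L\oplus V,\delta_1\oplus\varphi_1,\delta_2\oplus\varphi_2)$ is a Lie-BiDer pair.

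First I would recall that $(L\oplus V,[\cdot,\cdot]_{\ltimes})$ is the classical semi-direct product Lie algebra: antisymmetry is immediate, and the Jacobi identity follows from the Jacobi identity in $\mathfrak{L}$ together with the fact that $\rho\colon L\to\mathrm{gl}(V)$ is a Lie algebra morphism (here $\delta_1,\delta_2,\varphi_1,\varphi_2$ play no role). Next, to see that $\delta_i\oplus\varphi_i$ is a derivation, I would expand both sides of the Leibniz identity on elements $x+a$ and $y+b$ and compare the $L$- and $V$-components separately. The $L$-component reduces exactly to $\delta_i[x,y]=[\delta_i x,y]+[x,\delta_i y]$, i.e.\ to $\delta_i$ being a derivation of $\mathfrak{L}$. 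Collecting the $V$-component, after grouping the terms in $a$ and in $b$ one is left precisely with the compatibility $\varphi_i\circ\rho(x)=\rho(x)\circ\varphi_i+\rho(\delta_i x)$, which is one of the defining relations of a representation of the Lie-BiDer pair (it makes $(\mathcal V,\rho,\varphi_i)$ a representation of the LieDer pair $(\mathfrak{L},\delta_i)$); hence the Leibniz identity holds for each $i$.

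It then remains to verify the biderivation identity for $\delta_1\oplus\varphi_1$ and $\delta_2\oplus\varphi_2$. Writing $X=x+a$ and $Y=y+b$, a direct computation gives that the left-hand side equals $[\delta_1 x,\delta_2 y]+\rho(\delta_1 x)(\varphi_2 b)-\rho(\delta_2 y)(\varphi_1 a)$ and the right-hand side equals $[\delta_1 y,\delta_2 x]+\rho(\delta_1 y)(\varphi_2 a)-\rho(\delta_2 x)(\varphi_1 b)$. Equality of the $L$-components is exactly condition \eqref{condition biderivation} for $\mathfrak{L}$, which holds since $(\mathfrak{L},\delta_1,\delta_2)$ is a Lie-BiDer pair. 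For the $V$-components, I would rearrange the desired equality into $\big(\rho(\delta_1 x)\circ\varphi_2+\rho(\delta_2 x)\circ\varphi_1\big)(b)=\big(\rho(\delta_1 y)\circ\varphi_2+\rho(\delta_2 y)\circ\varphi_1\big)(a)$, and observe that each side vanishes by \eqref{representation Lie-BiDer pair}. This proves the identity, so $(\delta_1\oplus\varphi_1)\circ(\delta_2\oplus\varphi_2)$ is a derivation and the claim follows.

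I do not expect a genuine obstacle: the proposition is a bookkeeping verification, and in particular the weighted Rota-Baxter operator does not enter at all at this stage. The only points needing care are keeping the $L$- and $V$-components separate in every identity and recognising that the biderivation condition for the large algebra decouples cleanly into the biderivation condition \eqref{condition biderivation} of $\mathfrak{L}$ and an immediate consequence of the cross-relation \eqref{representation Lie-BiDer pair}, while the $V$-component of the Leibniz identity forces (and uses) the LieDer-representation compatibility of each $\varphi_i$.
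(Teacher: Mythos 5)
Your proposal is correct, and its decisive step---checking that $[(\delta_1\oplus\varphi_1)(x+a),(\delta_2\oplus\varphi_2)(y+b)]_\ltimes$ is symmetric in the two arguments by splitting into the $L$-component, handled by \eqref{condition biderivation}, and the $V$-component, handled by \eqref{representation Lie-BiDer pair}---is exactly the computation in the paper's proof; the extra verifications you include (Jacobi identity for $[\cdot,\cdot]_\ltimes$ and the Leibniz rule for each $\delta_i\oplus\varphi_i$) are steps the paper dismisses as standard. The one caveat is that the compatibility $\varphi_i\circ\rho(x)=\rho(\delta_i x)+\rho(x)\circ\varphi_i$ that your derivation check requires is not literally among the conditions in the paper's definition of a representation of a Lie-BiDer pair (which records only \eqref{representation Lie-BiDer pair}), so you are in effect supplying a hypothesis the paper tacitly assumes rather than one it states.
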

\begin{proof}
We need just to prove that the condition \eqref{condition biderivation} is satisfied for \textbf{semi-direct product}.	 
	 \begin{align*}
	 	[(\delta_1\oplus\varphi_1)(x+a),(\delta_2\oplus\varphi_2)(y+b)]_\ltimes&=[\delta_1x+\varphi_1(a),\delta_2y+\varphi_2(b)]_\ltimes\\
	 	&=[\delta_1x,\delta_2y]+\rho(\delta_1x)\varphi_2(b)-\rho(\delta_2y)\varphi_1(a)\\
	 	&\overset{\ref{condition biderivation}}{=}[\delta_1y,\delta_2x]+\rho(\delta_1x)\varphi_2(b)-\rho(\delta_2y)\varphi_1(a)\\
	 	&\overset{\ref{representation Lie-BiDer pair}}{=}[\delta_1y,\delta_2x]-\rho(\delta_2x)\varphi_1(b)+\rho(\delta_1y)\varphi_2(a)\\
	 	&=[(\delta_1\oplus\varphi_1)(y+b),(\delta_2\oplus\varphi_2)(x+a)]_\ltimes
	 \end{align*}
 This complete the proof.
\end{proof}

\subsection{Weighted Rota-Baxter LieDer pairs}
In this subsection, we introduce a notion of $\lambda$-weighted Rota-Baxter LieDer pair (or simply weighted Rota-Baxter LieDer pair) and some basic definitions. Denote the Lie algebra $(L,[\cdot,\cdot])$ by $\mathfrak{L}$ and its representation on a vector space $V$ by $\mathcal{V}=(V;\rho)$.\\

A weighted Rota-Baxter Lie algebra consists of a Lie algebra $\mathfrak{L}$ equipped with a Rota-Baxter operator of weight $\lambda$ denoted by $R$.\\
Inspired by the notion of LieDer pair \cite{R0} and the definition of weighted Rota-Baxter Lie algebra \cite{A6} we introduce the following. 
\begin{defn}
	A weighted Rota-Baxter LieDer pair consists of a LieDer pair $(\mathfrak{L},\delta)$ together with a $\lambda$-weighted Rota-Baxter operator $R$ such that
	\begin{equation}\label{condition1 RBLieDer pair}
		R\circ \delta=\delta\circ R.
	\end{equation} 
\end{defn}
\begin{ex}
	Let $\{e_1,e_2\}$ be a basis of a $2$-dimensional vector space $L$ over $\mathbb{K}$. Given a Lie structure $[e_1,e_2]=e_2$, then the triple $(\mathfrak{L},\delta,R)$ is a $\lambda$-weighted Rota-Baxter LieDer pair with 
	\begin{align*}
		\delta=\begin{pmatrix}
			0 & 0 \\
			0 & a 
		\end{pmatrix}\quad \text{ and   } \quad	R=\begin{pmatrix}
		0 & 0 \\
		0 & b
	\end{pmatrix},\quad \forall a,b \in \mathbb{K}.
	\end{align*}
\end{ex}
\begin{defn}
	A $\lambda$-weighted Rota-Baxter LieDer pair morphism from $(L_1,\delta_1,R_1)$ to $(L_2,\delta_2,R_2)$ is a Lie algebra morphism  $\varphi:L_1\rightarrow L_2$ such that the following identities holds, for all $x,y\in L_1$
	\begin{eqnarray}
		\varphi\circ \delta_1&=&\delta_2\circ\varphi,\label{morphism RBO2}\\
		\varphi\circ R_1&=&R_2\circ\varphi.\label{morphism RBO3}
	\end{eqnarray}
\end{defn}
Let $(\mathfrak{L},\delta)$ be a LieDer pair. Recall that a representation of it is a vector space $V$ with two linear maps $\rho:L\rightarrow gl(V)$ and $\delta_\mathrm{V}:L\rightarrow L$ such that, forall $x,y\in L$ 
\begin{eqnarray*}
	\rho([x,y])&=&\rho(x)\circ \rho(y)-\rho(y)\circ \rho(x),\\
	\delta_\mathrm{V}\circ \rho(x)&=&\rho(\delta x)+\rho(x)\circ \delta_\mathrm{V}.
\end{eqnarray*}

\begin{defn}
	Let $(\mathfrak{L},\delta,R)$ be a weighted Rota-Baxter LieDer pair. A representation of it is a triple $(\mathcal{V},\delta_\mathrm{V},T)$ where $T:V\rightarrow V$ is a linear map such that for all $x\in L$ and $u\in V$
	\begin{eqnarray}
		\rho([x,y])&=&\rho(x)\circ \rho(y)-\rho(y)\circ \rho(x),\label{Rep of RBDer pair0} \\
		\delta_V\circ \rho(x)&=&\rho(\delta x)+\rho(x)\circ \delta_V,\label{Rep of RBDer pair1}\\
		\rho(Rx)(Tu)&=&T(\rho(Rx)(u)+\rho(x)(Tu)+\lambda \rho(x)u),\label{Rep of RBDer pair2}\\
		T\circ\delta_V&=&\delta_V\circ T. \label{Rep of RBDer pair3}
	\end{eqnarray}
\end{defn}
\begin{ex}
	Let $(L,\delta,R)$ be a $\lambda$-weighted Rota-Baxter LieDer pair and $(\mathcal{V},\delta_\mathrm{V},T)$ be a representation of it. Then for any scalar $\mu\in \mathbb{K}$, the triple $(\mathcal{V},\delta_\mathrm{V},\mu T)$ is a representation of the $(\mu\lambda)$-weighted Rota-Baxter LieDer pair $(L,\delta_\mathrm{V},\mu R)$.
\end{ex}
\begin{ex}
	Let $(L,\delta,R)$ be a $\lambda$-weighted Rota-Baxter LieDer pair and $(\mathcal{V},\delta_\mathrm{V},T)$ be a representation of it. Then the quadruple $(\mathcal{V},\delta_\mathrm{V},-\lambda \mathrm{Id}_\mathrm{V}-T)$ is a representation of the $\lambda$-weighted Rota-Baxter LieDer pair $(L,\delta,-\lambda\mathrm{Id}_\mathrm{L}-R)$.
\end{ex}
\begin{ex}
	Any $\lambda$-weighted Rota-Baxter LieDer pair $(L,\delta,R)$ is a representation of itself. Such a representation is called the adjoint representation.
\end{ex}
Next, we construct the semi-direct product in the context of $\lambda$-weighted Rota-Baxter LieDer pair.
\begin{prop}
	Let $(\mathfrak{L},\delta,R)$ be a weighted Rota-Baxter LieDer pair and  $(\mathcal{V},\delta_\mathrm{V},T)$ be a representation of it. Then $(L\oplus V,\delta\oplus \delta_{\mathrm{V}},R\oplus T)$ is a weighted Rota-Baxter LieDer pair where the Lie bracket on $L\oplus V$ is given by 
	\begin{equation*}
		[x+a,y+b]_\ltimes:=[x,y]+\rho(x)b-\rho(y)a,
	\end{equation*}
and the derivation is given by 
	\begin{eqnarray*}
		\delta\oplus\delta_\mathrm{V}(x+a)=\delta x+\delta_\mathrm{V}a
	\end{eqnarray*}
and The $\lambda$-weighted Rota-Baxter LieDer pair is given by 
	\begin{eqnarray*}
	R\oplus T(x+a)=R x+Ta,\quad \forall x,y\in L \quad \forall a,b\in V.
\end{eqnarray*}
We call such structure by \textbf{the semi-direct product} of the $\lambda$-weighted Rota-Baxter LieDer pair $(L,\delta,R)$ by a representation of it  $(\mathcal{V},\delta_\mathrm{V},T)$ and denoted by $L\ltimes_{\mathrm{R.B.LieDer}}V$.
\end{prop}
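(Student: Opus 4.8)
The plan is to check, one by one, the four conditions that make $(L\oplus V,\delta\oplus\delta_{\mathrm V},R\oplus T)$ a $\lambda$-weighted Rota-Baxter LieDer pair: that $[\cdot,\cdot]_\ltimes$ is a Lie bracket, that $\delta\oplus\delta_{\mathrm V}$ is a derivation for it, that $R\oplus T$ is a $\lambda$-weighted Rota-Baxter operator, and that the compatibility \eqref{condition1 RBLieDer pair} between $R\oplus T$ and $\delta\oplus\delta_{\mathrm V}$ holds. Each of these decomposes into an $L$-component and a $V$-component, and in every case the $L$-component is the corresponding identity already available for $(\mathfrak L,\delta,R)$, while the $V$-component is one of the representation axioms \eqref{Rep of RBDer pair0}--\eqref{Rep of RBDer pair3}.

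First I would recall that $(L\oplus V,[\cdot,\cdot]_\ltimes)$ is the usual semi-direct product Lie algebra: skew-symmetry is clear, and the Jacobi identity splits into the Jacobi identity of $\mathfrak L$ on the $L$-slot and, on the $V$-slot, exactly the representation condition \eqref{Rep of RBDer pair0}. Next, to see $\delta\oplus\delta_{\mathrm V}$ is a derivation of $[\cdot,\cdot]_\ltimes$, I would expand $(\delta\oplus\delta_{\mathrm V})[x+a,y+b]_\ltimes$ and $[(\delta\oplus\delta_{\mathrm V})(x+a),y+b]_\ltimes+[x+a,(\delta\oplus\delta_{\mathrm V})(y+b)]_\ltimes$: the $L$-parts agree because $\delta$ is a derivation of $\mathfrak L$, and the $V$-parts, involving $\delta_{\mathrm V}\rho(x)b$ and $\delta_{\mathrm V}\rho(y)a$, agree by two applications of \eqref{Rep of RBDer pair1}. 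This already yields that $(L\oplus V,\delta\oplus\delta_{\mathrm V})$ is a LieDer pair.

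The substance of the argument is the Rota-Baxter identity for $R\oplus T$. I would verify
\[
[(R\oplus T)(x+a),(R\oplus T)(y+b)]_\ltimes = (R\oplus T)\big([(R\oplus T)(x+a),y+b]_\ltimes+[x+a,(R\oplus T)(y+b)]_\ltimes+\lambda[x+a,y+b]_\ltimes\big)
\]
by matching $L$- and $V$-components. The $L$-component is precisely \eqref{RBO eq1} for $R$. The $V$-component reduces to the identity
\[
\rho(Rx)(Tb)-\rho(Ry)(Ta) = T\big(\rho(Rx)b+\rho(x)(Tb)+\lambda\rho(x)b\big)-T\big(\rho(Ry)a+\rho(y)(Ta)+\lambda\rho(y)a\big),
\]
which follows from applying \eqref{Rep of RBDer pair2} once with $(x,b)$ in place of $(x,u)$ and once with $(y,a)$, and subtracting. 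Keeping track of which of the two applications of \eqref{Rep of RBDer pair2} cancels which cross-term is the only delicate bookkeeping step; there is no conceptual obstacle here.

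Finally, the compatibility \eqref{condition1 RBLieDer pair} for the sum is immediate: on $x+a$ we get $(R\oplus T)(\delta\oplus\delta_{\mathrm V})(x+a)=R\delta x+T\delta_{\mathrm V}a$, and this equals $\delta Rx+\delta_{\mathrm V}Ta=(\delta\oplus\delta_{\mathrm V})(R\oplus T)(x+a)$ by \eqref{condition1 RBLieDer pair} for $R$ together with \eqref{Rep of RBDer pair3}. Assembling the four verifications gives the proposition.
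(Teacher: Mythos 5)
Your proposal is correct and follows exactly the strategy the paper indicates (componentwise verification that $\delta\oplus\delta_{\mathrm V}$ is a derivation, that $R\oplus T$ is a $\lambda$-weighted Rota-Baxter operator, and that the two commute), the only difference being that the paper defers the details to the references \cite{A6} and \cite{R0} while you carry them out; each $V$-component you identify matches the intended representation axiom \eqref{Rep of RBDer pair0}--\eqref{Rep of RBDer pair3}.
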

\begin{proof}
	According to the \cite{A6} and \cite{R0} the proof is straightforward. The idea is to show that $\delta\oplus \delta_{\mathrm{V}}$ is a derivation on $L\oplus V$ and to show that $R\oplus T$ is a weighted Rota-Baxter operator.
\end{proof}
\begin{prop}
	Let $(\mathfrak{L},\delta,R)$ be a weighted Rota-Baxter LieDer pair. Then we have the triple $(L,[\cdot,\cdot]_\mathrm{R},\delta)$ is a LieDer pair with 
	\begin{equation}\label{induced RBLieDer}
		[x,y]_R:=[Rx,y]+[x,Ry]+\lambda [x,y]
	\end{equation}  
and it is denoted simply by $(\mathfrak{L}_\mathrm{R},\delta)$.
\end{prop}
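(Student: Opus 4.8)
The plan is to check the two defining conditions of a LieDer pair for the triple $(L,[\cdot,\cdot]_R,\delta)$: first that $[\cdot,\cdot]_R$ is a Lie bracket on $L$, and second that $\delta$ is a derivation for this new bracket.

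For the first point, antisymmetry of $[\cdot,\cdot]_R$ is immediate from antisymmetry of $[\cdot,\cdot]$, since each of the three terms in \eqref{induced RBLieDer} is antisymmetric. The Jacobi identity is the only genuinely computational step: I would expand $[[x,y]_R,z]_R$ and its two cyclic permutations using the definition \eqref{induced RBLieDer}, then apply the weighted Rota-Baxter identity \eqref{RBO eq1} each time a bracket of two $R$-images appears (rewriting $[Ru,Rv]$ as $R([Ru,v]+[u,Rv]+\lambda[u,v])$). After this substitution every remaining term is of the form $[R(\,\cdot\,),[\,\cdot\,,\,\cdot\,]]$ or $\lambda[\,\cdot\,,[\,\cdot\,,\,\cdot\,]]$, and grouping them so as to invoke the Jacobi identity of the original bracket $[\cdot,\cdot]$ makes the whole sum collapse to zero. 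This is precisely the classical construction of the descendent Lie algebra $\mathfrak{L}_R$ attached to a Rota-Baxter operator of weight $\lambda$ (and it also shows $R\colon\mathfrak{L}_R\to\mathfrak{L}$ is a Lie morphism), so I would either carry out this standard expansion or cite it from \cite{A2,A6}.

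For the second point, I would simply apply $\delta$ to \eqref{induced RBLieDer}, use that $\delta$ is a derivation of $[\cdot,\cdot]$ (the LieDer condition), and use the commutation $R\circ\delta=\delta\circ R$ of \eqref{condition1 RBLieDer pair}:
\begin{align*}
\delta[x,y]_R &= [\delta(Rx),y]+[Rx,\delta y]+[\delta x,Ry]+[x,\delta(Ry)]+\lambda[\delta x,y]+\lambda[x,\delta y]\\
&= \big([R\delta x,y]+[\delta x,Ry]+\lambda[\delta x,y]\big)+\big([Rx,\delta y]+[x,R\delta y]+\lambda[x,\delta y]\big)\\
&= [\delta x,y]_R+[x,\delta y]_R,
\end{align*}
which is exactly the derivation property for $[\cdot,\cdot]_R$. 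Combined with the first point this shows $(\mathfrak{L}_R,\delta)$ is a LieDer pair.

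The main obstacle is the Jacobi identity verification in the first step, which is the usual somewhat lengthy bookkeeping for weighted Rota-Baxter operators; the derivation compatibility in the second step is a short manipulation that only uses \eqref{condition1 RBLieDer pair} and the original LieDer axiom, so it presents no difficulty.
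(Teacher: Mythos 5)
Your proposal is correct and follows essentially the same route as the paper: the paper's proof likewise takes for granted that $[\cdot,\cdot]_R$ is a Lie bracket (the classical descendent Lie algebra of a weighted Rota--Baxter operator, as in \cite{A6}) and only verifies the derivation property of $\delta$ for the new bracket, via exactly the computation you give using \eqref{condition1 RBLieDer pair} and the derivation property for the original bracket. Your additional remarks on antisymmetry and the Jacobi identity are a harmless (indeed slightly more complete) supplement to what the paper actually writes down.
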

\begin{proof}
	Let $x,y\in L$
	\begin{align*}
		\delta([x,y]_R)&=\delta([Rx,y]+[x,Ry]+\lambda [x,y])\\
		&=\delta([Rx,y])+\delta ([x,Ry]) +\lambda \delta([x,y])\\
		&=[\delta\circ Rx,y]+[Rx,\delta y]+[\delta x,Ry]+[x,\delta\circ R]+\lambda [\delta x,y]+\lambda [x,\delta y]\\
		&\overset{\eqref{condition1 RBLieDer pair}}{=}[R\circ \delta x,y]+[Rx,\delta y]+[\delta x,Ry]+[x,R\circ \delta]+\lambda [\delta x,y]+\lambda [x,\delta y]\\
		&=\Big([R\circ \delta x,y]+ [\delta x,Ry]+\lambda [\delta x,y]\Big)+\Big([Rx,\delta y]+[x,R\circ \delta]+\lambda [x,\delta y] \Big)\\
		&=[\delta x,y]_R+[x,\delta y]_R.
	\end{align*}	
This complete the proof.
\end{proof}
\begin{prop}
	The triple $(\mathfrak{L}_R,\delta,R)$ is a weighted Rota-Baxter LieDer pair and the map $R:L_R\rightarrow L$ is a morphism of weighted Rota-Baxter LieDer pair.
\end{prop}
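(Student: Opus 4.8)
The plan is to verify the three defining conditions of a $\lambda$-weighted Rota-Baxter LieDer pair for the triple $(\mathfrak{L}_R,\delta,R)$ in turn, and to observe that two of them require no new work. The first condition — that $(\mathfrak{L}_R,\delta)$ is a LieDer pair — is precisely the content of the preceding proposition, so it is already available. The third condition, $R\circ\delta=\delta\circ R$, is literally \eqref{condition1 RBLieDer pair}, which holds by the hypothesis that $(\mathfrak{L},\delta,R)$ is a weighted Rota-Baxter LieDer pair. Hence the only substantive step is the middle condition: that $R$ is again a $\lambda$-weighted Rota-Baxter operator for the bracket $[\cdot,\cdot]_R$, i.e.
\[
[Rx,Ry]_R=R\big([Rx,y]_R+[x,Ry]_R+\lambda[x,y]_R\big),\qquad x,y\in L.
\]

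To establish this identity I would expand both sides using the definition \eqref{induced RBLieDer} and then apply the weighted Rota-Baxter identity \eqref{RBO eq1} for $R$ on $\mathfrak{L}$. On the right-hand side, a direct expansion of $[Rx,y]_R$, $[x,Ry]_R$ and $\lambda[x,y]_R$ shows that the argument of $R$ equals $[R^2x,y]+[x,R^2y]+2[Rx,Ry]+2\lambda[Rx,y]+2\lambda[x,Ry]+\lambda^2[x,y]$. On the left-hand side, $[Rx,Ry]_R=[R^2x,Ry]+[Rx,R^2y]+\lambda[Rx,Ry]$, and applying \eqref{RBO eq1} to each of the three summands $[R^2x,Ry]$, $[Rx,R^2y]$ and $\lambda[Rx,Ry]$ rewrites it as $R$ applied to exactly the same combination of terms. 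Comparing the two gives the result. This is a routine but slightly lengthy bookkeeping calculation; the only mild obstacle is keeping track of all the bracket terms, and it is the single step with any content.

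For the morphism claim I would check the three conditions defining a weighted Rota-Baxter LieDer pair morphism from $(\mathfrak{L}_R,\delta,R)$ to $(\mathfrak{L},\delta,R)$ with underlying linear map $R$. That $R$ is a Lie algebra homomorphism from $(L,[\cdot,\cdot]_R)$ to $(L,[\cdot,\cdot])$ is immediate from \eqref{induced RBLieDer} together with \eqref{RBO eq1}, since $R([x,y]_R)=R([Rx,y]+[x,Ry]+\lambda[x,y])=[Rx,Ry]$. Condition \eqref{morphism RBO2} reads $R\circ\delta=\delta\circ R$, which is again \eqref{condition1 RBLieDer pair}, and condition \eqref{morphism RBO3} reads $R\circ R=R\circ R$, which is trivial. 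Therefore $R$ is the asserted morphism, and the proof is complete.
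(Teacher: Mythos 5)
Your proposal is correct and follows essentially the same route as the paper: the morphism claim rests on the identity $R([x,y]_R)=[Rx,Ry]$, which is exactly what the paper's (very short) proof records, and the compatibility $R\circ\delta=\delta\circ R$ is inherited for free. The only difference is that you explicitly verify the weighted Rota-Baxter identity for the induced bracket $[\cdot,\cdot]_R$ (your bookkeeping of the six terms on each side checks out), whereas the paper silently takes this standard fact for granted from the weighted Rota-Baxter Lie algebra literature; your version is therefore more self-contained but not a different argument.
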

\begin{proof}
	We have that $R$ is a $\lambda$-weighted Rota-Baxter operator on L, it follows then from  \eqref{RBO eq1} that 
	\begin{equation*}
		R([x,y]_R)=[Rx,Ry],\quad \forall x,y\in L.
	\end{equation*}
	This complete the proof.
\end{proof}

\begin{thm}\label{theorem needed in the cohomology}
	Let $(\mathfrak{L},\delta,R)$ be a weighted Rota-Baxter LieDer pair and $(\mathcal{V},\delta_{\mathrm{V}},T)$ be a representation of it. Define a map
	\begin{equation}\label{rep of new Rota-Baxter LieDer pairs}
		\widetilde{\rho}(x)(a)=\rho(Rx)(a)-T(\rho(x)(a)),\quad \text{for }x\in L , a\in V
	\end{equation} 
	Then $\widetilde{\rho}$ defines a representation of the LieDer pair $(L_{\mathrm{R}},\delta)$ on $(\widehat{\mathcal{V}},\delta_{\mathrm{V}})=((V;\widetilde{\rho}),\delta_{\mathrm{V}})$ if and only if the too conditions \eqref{condition1 RBLieDer pair} and \eqref{Rep of RBDer pair3} are satisfied.
Moreover, $(\widehat{\mathcal{V}},\delta_{\mathrm{V}},T)$ is a representation of the weighted Rota-Baxter LieDer pair $(L_{\mathrm{R}},\delta,R)$.
\end{thm}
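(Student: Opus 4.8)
The plan is to verify, one after another, the two axioms making $(\widehat{\mathcal{V}},\delta_{\mathrm{V}})$ a representation of the LieDer pair $(L_{\mathrm{R}},\delta)$, and then the two remaining axioms needed for $(\widehat{\mathcal{V}},\delta_{\mathrm{V}},T)$ to be a representation of the weighted Rota-Baxter LieDer pair $(L_{\mathrm{R}},\delta,R)$. I would begin with the Lie part: checking that $\widetilde{\rho}$ is a morphism of Lie algebras from $L_{\mathrm{R}}$ to $\mathrm{gl}(V)$, that is, $\widetilde{\rho}([x,y]_{\mathrm{R}})=\widetilde{\rho}(x)\widetilde{\rho}(y)-\widetilde{\rho}(y)\widetilde{\rho}(x)$. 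Expanding the left side by \eqref{induced RBLieDer}, using $R([x,y]_{\mathrm{R}})=[Rx,Ry]$ together with \eqref{Rep of RBDer pair0}, and expanding the right side directly from \eqref{rep of new Rota-Baxter LieDer pairs}, the difference of the two sides reduces, after rewriting each occurrence of $\rho(Rx)\circ T$ and $\rho(Ry)\circ T$ via \eqref{Rep of RBDer pair2}, to a sum in which the terms $T\rho(x)T\rho(y)$ and $T\rho(y)T\rho(x)$ cancel and everything else collects into $-T$ applied to the expansion of $\rho([x,y]_{\mathrm{R}})$. This part uses only \eqref{RBO eq1}, \eqref{Rep of RBDer pair0} and \eqref{Rep of RBDer pair2}, so it needs no hypothesis on $\delta$ or $\delta_{\mathrm{V}}$.

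Next I would treat the derivation axiom, which is where the equivalence lives. Write $\widetilde{\rho}(x)=\rho(Rx)-T\circ\rho(x)$ and compute $\delta_{\mathrm{V}}\circ\widetilde{\rho}(x)$. Applying \eqref{Rep of RBDer pair1} to $\rho(Rx)$ gives $\delta_{\mathrm{V}}\circ\rho(Rx)=\rho(\delta Rx)+\rho(Rx)\circ\delta_{\mathrm{V}}$; using \eqref{Rep of RBDer pair3} to move $\delta_{\mathrm{V}}$ across $T$ in $\delta_{\mathrm{V}}\circ T\circ\rho(x)$ and then \eqref{Rep of RBDer pair1} again yields $\delta_{\mathrm{V}}\circ T\circ\rho(x)=T\circ\rho(\delta x)+T\circ\rho(x)\circ\delta_{\mathrm{V}}$. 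Replacing $\rho(\delta Rx)$ by $\rho(R\delta x)$ via \eqref{condition1 RBLieDer pair} and regrouping gives exactly $\widetilde{\rho}(\delta x)+\widetilde{\rho}(x)\circ\delta_{\mathrm{V}}$, which is the LieDer-representation axiom. For the converse I would run this chain in reverse: granting that $(\mathcal{V},\delta_{\mathrm{V}},T)$ is already a representation (so \eqref{Rep of RBDer pair1} holds), the identity $\delta_{\mathrm{V}}\circ\widetilde{\rho}(x)=\widetilde{\rho}(\delta x)+\widetilde{\rho}(x)\circ\delta_{\mathrm{V}}$ collapses to $\rho\big((\delta R-R\delta)x\big)=(\delta_{\mathrm{V}}\circ T-T\circ\delta_{\mathrm{V}})\circ\rho(x)$ for all $x$, and the two sides vanish precisely because of \eqref{condition1 RBLieDer pair} and \eqref{Rep of RBDer pair3}; conversely, these are exactly the relations forced when this identity holds. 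This settles the "if and only if" and, combined with the Lie part, shows $(\widehat{\mathcal{V}},\delta_{\mathrm{V}})$ is a representation of $(L_{\mathrm{R}},\delta)$.

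For the "moreover" it remains to check \eqref{Rep of RBDer pair2} and \eqref{Rep of RBDer pair3} for $(\widehat{\mathcal{V}},\delta_{\mathrm{V}},T)$ relative to $(L_{\mathrm{R}},\delta,R)$, recalling that $(L_{\mathrm{R}},\delta,R)$ is again a weighted Rota-Baxter LieDer pair by the preceding proposition. Axiom \eqref{Rep of RBDer pair3}, $T\circ\delta_{\mathrm{V}}=\delta_{\mathrm{V}}\circ T$, is inherited verbatim from $(\mathcal{V},\delta_{\mathrm{V}},T)$. For \eqref{Rep of RBDer pair2} I would expand $\widetilde{\rho}(Rx)(Tu)$ and $T\big(\widetilde{\rho}(Rx)(u)+\widetilde{\rho}(x)(Tu)+\lambda\widetilde{\rho}(x)(u)\big)$ by \eqref{rep of new Rota-Baxter LieDer pairs}, then invoke the original \eqref{Rep of RBDer pair2} twice: once with $x$ replaced by $Rx$ to rewrite $\rho(R^{2}x)\circ T$, and once with $x$ to rewrite $\rho(Rx)\circ T$. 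Both sides then reduce to $T\big(\rho(R^{2}x)(u)+\rho(Rx)(Tu)+\lambda\rho(Rx)(u)-T\rho(Rx)(u)-T\rho(x)(Tu)-\lambda T\rho(x)(u)\big)$, which completes the argument.

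The longest bookkeeping is the Lie part, but the genuinely delicate step is the derivation part: one must apply \eqref{Rep of RBDer pair1}, \eqref{Rep of RBDer pair3} and \eqref{condition1 RBLieDer pair} in the right order, and, for the "only if" direction, observe that no cancellation other than those two commutation relations can make the computation close.
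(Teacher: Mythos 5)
Your core computation for the derivation axiom is exactly the paper's: apply \eqref{Rep of RBDer pair1} to $\rho(Rx)$, commute $\delta_{\mathrm{V}}$ past $T$ via \eqref{Rep of RBDer pair3}, apply \eqref{Rep of RBDer pair1} again, and finally replace $\rho(\delta Rx)$ by $\rho(R\delta x)$ via \eqref{condition1 RBLieDer pair}. Where you differ is in self-containedness: the paper simply cites the literature for the fact that $\widetilde{\rho}$ is a Lie-algebra representation of $L_{\mathrm{R}}$ and for the ``moreover'' clause, whereas you verify both by hand. Your verifications are correct (in the ``moreover'' part, both sides do collapse to $T\big(\rho(R^{2}x)u+\lambda\rho(Rx)u\big)$ after two uses of \eqref{Rep of RBDer pair2}), so your write-up proves strictly more than the paper's does; the paper's route buys brevity at the cost of outsourcing the bulk of the statement.

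One step in your argument does not close: the ``only if'' direction. You correctly reduce the derivation axiom for $\widetilde{\rho}$ to the single identity $\rho\big((\delta R-R\delta)x\big)=(\delta_{\mathrm{V}}\circ T-T\circ\delta_{\mathrm{V}})\circ\rho(x)$ for all $x$, but this identity does not force its two sides to vanish separately --- take $\rho=0$ and $T$ not commuting with $\delta_{\mathrm{V}}$, or more generally any non-faithful situation, and the identity holds without \eqref{condition1 RBLieDer pair} or \eqref{Rep of RBDer pair3}. So the claim that ``these are exactly the relations forced'' is unjustified. To be fair, the paper does not prove the converse at all, and as stated the ``if and only if'' is vacuous anyway, since \eqref{condition1 RBLieDer pair} and \eqref{Rep of RBDer pair3} are already part of the hypotheses that $(\mathfrak{L},\delta,R)$ is a weighted Rota--Baxter LieDer pair and that $(\mathcal{V},\delta_{\mathrm{V}},T)$ is a representation of it; but you should either flag the converse as false in general or restate the theorem so that only the ``if'' direction is asserted.
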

\begin{proof}
	We have already, from \cite{A6}, that $\widetilde{\rho}$ is a representation of $L_\mathrm{R}$ on $V$ in the context of Lie algebra structure. So we need just to prove that equation \eqref{Rep of RBDer pair1} holds for the representation $\widetilde{\rho}$. Let $x\in L$ and $a\in V$
	\begin{align*}
		\delta_{\mathrm{V}}\circ \widetilde{\rho}(x)a&=\delta_{\mathrm{V}}\circ (\rho(Rx)a-T(\rho(x)a))\\
		&=\delta_{\mathrm{V}}\circ \rho(Rx)a-\delta_{\mathrm{V}}(T(\rho(x)a))\\
		&\overset{\ref{Rep of RBDer pair1}}{=}\rho(\delta\circ Rx)a+\rho(Rx)\circ \delta_{\mathrm{V}}a-\delta_{\mathrm{V}}(T(\rho(x)a))\\
		&\overset{\ref{Rep of RBDer pair3}}{=}\rho(\delta\circ Rx)a+\rho(Rx)\circ \delta_{\mathrm{V}}a-T(\delta_{\mathrm{V}}(\rho(x)a))\\
		&\overset{\ref{Rep of RBDer pair1}}{=}\rho(\delta\circ Rx)a+\rho(Rx)\circ \delta_{\mathrm{V}}a-T(\rho(\delta x)a+\rho(x)\circ \delta_{\mathrm{V}}a)\\
		&\overset{\ref{condition1 RBLieDer pair}}{=}\rho(R\circ\delta x)a-T(\rho(\delta x)a)+\rho(Rx)\circ \delta_{\mathrm{V}}a-T(\rho(x)\circ \delta_{\mathrm{V}}a)\\
		&=\widetilde{\rho}(\delta x)a+\widetilde{\rho}(x)\circ \delta_{\mathrm{V}}a.
	\end{align*}
This means that $(\widetilde{\mathcal{V}},\delta_{\mathrm{V}})$ is a representation of the LieDer pair $(L_\mathrm{R},\delta)$. For the next result see (Theorem 2.17 in \cite{A6} ).
\end{proof}

\section{Cohomology of weighted Rota-Baxter LieDer and AssDer pairs}\label{section3}
In this section we introduce the cohomology of weighted Rota-Baxter LieDer and AssDer pairs.
\subsection{Cohomology of weighted Rota-Baxter LieDer pairs}
In this subsection, we first recall the Chevally-Eilenberg cohomology of Lie algebras, the cohomology of  weighted Rota-Baxter Lie algebras \cite{A6} and the cohomology of LieDer pairs \cite{R0} with coefficients in an arbitrary representation. Then we define the cohomology of $\lambda$-weighted Rota-Baxter LieDer pairs.\\
Let $\mathfrak{L}=(L,[\cdot,\cdot])$ be a Lie algebra, the Chevally-Eilenberg cohomology of $\mathfrak{L}$ with cofficents in the representation $\mathcal{V}$ is given by the cohomology of the cochain complex $\{\mathrm{C}^{\star}(L;\mathcal{V}),\mathrm{d}\}$ where $\mathrm{C}^{n}(L;\mathcal{V})=\mathrm{Hom}(\wedge^nL,V)$ for $n\geq0$ and the coboundary map 
\begin{equation*}
	\mathrm
	{d}:C^n(L;\mathcal{V})\rightarrow C^{n+1}(L;\mathcal{V})
\end{equation*}
is given by 
\begin{align*}
	(\mathrm
	{d}(f_n))(x_1,\ldots,x_{n+1})&=\displaystyle\sum_{i=1}^{n+1}(-1)^{i+n}\rho(x_i)f_n(x_1,\ldots,\hat{x_i},\ldots,x_{n+1})\\
	&+\displaystyle\sum_{1\leq i<j\leq n+1}(-1)^{i+j+n+1}f_n([x_i,x_j],x_1,\ldots,\hat{x_i},\ldots,\hat{x_j},\ldots,x_{n+1})
\end{align*}
for $f_n\in C^n({L;\mathcal{V}})$ and $x_1,\ldots,x_{n+1}\in L$.\\
Let $(\mathfrak{L},\delta)$ be a LieDer pair. Recall that the cohomology of LieDer pair $(\mathfrak{L},\delta)$ with coefficents in a representation $(\mathcal{V},\delta_{\mathrm{V}})$ is given as follow: \\
The set of LieDer pair $0$-cochains is $0$ and the set of LieDer pair $1$-cochains is $\mathfrak{C}^1_{\mathrm{LieDer}}(L;\mathcal{V})=\mathrm{Hom}(L,V)$. For $n\geq2$, the set of LieDer pair $n$-cochains is given by 
\begin{equation*}
	\mathfrak{C}^n_{\mathrm{LieDer}}(L;\mathcal{V}):=C^n(L;\mathcal{V})\times C^{n-1}(L;\mathcal{V})
\end{equation*}
For $n\geq1$, define the following operator $\partial:C^n(L;\mathcal{V})\rightarrow C^n(L;\mathcal{V})$ by 
\begin{equation*}
	\partial f_n=\displaystyle\sum_{i=1}^nf_n\circ (\mathbf{1}\otimes \cdots\otimes \underbrace{\delta}_{\text{i-th place}}\otimes \cdots\otimes \mathbf{1})-\delta_{\mathrm{V}}\circ f_n.
\end{equation*}
Define $\mathrm{D}:\mathfrak{C}^1_{\mathrm{LieDer}}(L;\mathcal{V})\rightarrow \mathfrak{C}^2_{\mathrm{LieDer}}(L;\mathcal{V})$ by 
\begin{equation*}
	\mathrm{D}f_1=(\mathrm
	{d}(f_1),(-1)^1\partial f_1),\quad \forall f_1\in \mathrm{Hom}(L,V).
\end{equation*}
And for $n\geq2$, define $\mathrm{D}:\mathfrak{C}^n_{\mathrm{LieDer}}(L;\mathcal{V})\rightarrow \mathfrak{C}^{n+1}_{\mathrm{LieDer}}(L;\mathcal{V})$ by 
\begin{equation*}
	\mathrm{D}(f_n,g_{n-1})=(\mathrm{d}(f_n),\mathrm{d}(g_{n-1})+(-1)^n\partial f_n)
\end{equation*}
for all $f_n\in C^n(L;\mathcal{V})$ and $g_{n-1}\in C^{n-1}(L;\mathcal{V})$.\\
Recall also the following equation
\begin{equation}\label{coboundary1}
	\mathrm{d}\circ \partial =\partial \circ \mathrm{d}
\end{equation}

 Let $(\mathfrak{L},R)$ be a Rota-Baxter Lie algebra, the cohomology of weighted Rota-Baxter Lie algebra with coefficients in a representation $(\mathcal{V},T)$ is given as follow as follow :\\
For each $n\geq 0$, define an abelian group $\mathfrak{C}^n_{\mathrm{R}}(L;\mathcal{V})$ by 
		$$ \mathfrak{C}^n_{\mathrm{R}}(L;V)=
\left\{
\begin{array}{ll}
	C^0(L;\mathcal{V})=V,& \text{ if } n=0,\\
	C^n(L;\mathcal{V})\oplus C^{n-1}(L_{\mathrm{R}};\widetilde{\mathcal{V}})=\mathrm{Hom}(\wedge^nL,V)\otimes\mathrm{Hom}(\wedge^{n-1}L,V),& \text{ if } n\geq1.	
\end{array}
\right.
$$
Where $\mathfrak{L}_{\mathrm{R}}=(L,[\cdot,\cdot]_{\mathrm{R}},R)$ is a Rota-Baxter Lie algebra with the bracket is defined in \eqref{induced RBLieDer} and $\widetilde{\mathcal{V}}=(V,\widetilde{\rho})$ is a representation of it with $\widetilde{\rho}$ is defined in \eqref{rep of new Rota-Baxter LieDer pairs}.\\
The coboundary map is defined as $\mathfrak{d}_{\mathrm{R}}:\mathfrak{C}^n_{\mathrm{R}}(L;\mathcal{V})\rightarrow \mathfrak{C}^{n+1}_{\mathrm{R}}(L;\mathcal{V})$ by 
		$$ 
\left\{
\begin{array}{ll}
	\mathfrak{d}_{\mathrm{R}}(v)=(\mathrm{d}(v),-v),& \text{ for } v\in \mathfrak{C}^0_{\mathrm{R}}(L;\mathcal{V})=V,\\
	\mathfrak{d}_{\mathrm{R}}(f_n,g_{n-1})=(\mathrm{d}(f_n),-\mathrm{d}_{\mathrm{R}}(g_{n-1})-\Phi^n(f_n)),& \text{ for } (f_n,g_{n-1})\in\mathfrak{C}^n_{\mathrm{R}}(L;\mathcal{V}) .	
\end{array}
\right.
$$
With $\mathrm{d}_{\mathrm{R}}:C^n(L_{\mathrm{R}};\widetilde{\mathcal{V}})\rightarrow C^{n+1}(L_{\mathrm{R}};\widetilde{\mathcal{V}})$ is given by 
\begin{align*}
	(\mathrm{d}_{\mathrm{R}}f_n)(x_1,\ldots,x_{n+1})&=\displaystyle\sum_{i=1}^{n+1}(-1)^{i+n} \ \widetilde{\rho}(x_i)f_n(x_1,\ldots,\hat{x_i},\ldots,x_{n+1})\\
	&+\displaystyle\sum_{1\leq i<j\leq n+1}(-1)^{i+j+n+1}f_n([x_i,x_j]_{\mathrm{R}},x_1,\ldots,\hat{x_i},\ldots,\hat{x_j},\ldots,x_{n+1})
\end{align*}
and $\Phi:C^n(L;\mathcal{V})\rightarrow C^n(L_{\mathrm{R}};\widetilde{\mathcal{V}})$, is inspired from \cite{K0}, defined by

		\begin{equation}\label{cohomology of RBO}			
\left\{
\begin{array}{ll}
	\Phi=\mathbf{1}_{\mathrm{V}},& \\
	\Phi(f_n)(x_1,\cdots,x_n)=f_n(Rx_1,\cdots,Rx_n)-\displaystyle\sum_{k=0}^{n-1}\lambda^{n-k-1}\displaystyle\sum_{i_1<\ldots<i_k}T\circ f_n(x_1,\cdots,R(x_{i_1}),\cdots,R(x_{i_k}),\cdots,x_n).&  	
\end{array}
\right.
\end{equation}

is a morphism of cochain complex from $\{C^{\star}(L;\mathcal{V}),\mathrm{d}\}$ to $\{C^{\star}(L_{\mathrm{R}};\widetilde{\mathcal{V}}),\mathrm{d}\}$ i.e. 
\begin{equation}\label{coboundary2}
	\mathrm{d}_{\mathrm{R}}\circ\Phi=\Phi\circ \mathrm{d},\quad \forall n\geq0.
\end{equation}
Also since $L_{\mathrm{R}}$ is a Lie algebra and $\mathrm{d}_{\mathrm{R}}$ its coboundary with respect to the representation $\widetilde{\mathcal{V}}=(V;\widetilde{\rho})$ and  $(L_{\mathrm{R}},\delta)$ is a LieDer pair then, from the cohomology of LieDer pair (\cite{R0}. Lemma(3.1)), we get
\begin{equation}\label{coboundary3}
\partial\circ\mathrm{d}_{\mathrm{R}}=\mathrm{d}_{\mathrm{R}}\circ\partial
\end{equation}
Using all those tools we are in position to define the cohomology of $\lambda$-weighted Rota-Baxter LieDer pair $(\mathfrak{L},\delta,R)$ with coefficients in a representation $(\mathcal{V},\delta_\mathrm{V},T)$. The set of $\lambda$-weighted Rota-Baxter LieDer pair $0$-cochains is $0$ and the set of $\lambda$-weighted Rota-Baxter LieDer pair $1$-cochains to be $\mathfrak{C}^1_{\mathrm{RBLieDer}}(L,\mathcal{V})=\mathrm{Hom}(L,V)$.\\ For $n\geq2$ define $\lambda$-weighted Rota-Baxter LieDer pair $n$-cochains by 
\begin{equation*}
	\mathfrak{C}^n_{\mathrm{RBLieDer}}(L;\mathcal{V}):=\mathfrak{C}^n_{\mathrm{LieDer}}(L;\mathcal{V})\otimes C^{n-1}(L_R;\widetilde{\mathcal{V}})
\end{equation*}
Define 
\begin{equation*}
	\mathfrak{D}_{\mathrm{RBLieDer}}:\mathfrak{C}^n_{\mathrm{RBLieDer}}(L,\mathcal{V})\rightarrow \mathfrak{C}^{n+1}_{\mathrm{RBLieDer}}(L,\mathcal{V})
\end{equation*}
as follow
\begin{enumerate}
	\item[$\bullet$] For $n=1$
	\begin{eqnarray*}
		\mathfrak{D}_{\mathrm{RBLieDer}}&:&\mathfrak{C}^1_{\mathrm{RBLieDer}}(L,\mathcal{V})\rightarrow \mathfrak{C}^2_{\mathrm{RBLieDer}}(L,\mathcal{V}) \text{  is given by}\\
		&&\mathfrak{D}_{\mathrm{RBLieDer}}(f)=(d(f),-\partial (f),-\Phi (f)),\quad \forall f\in \mathrm{Hom}(L,V).
	\end{eqnarray*}
    \item [$\bullet$] For $n\geq2$ 
    \begin{eqnarray*}
    	\mathfrak{D}_{\mathrm{RBLieDer}}&:&\mathfrak{C}^n_{\mathrm{RBLieDer}}(L,\mathcal{V})\rightarrow \mathfrak{C}^{n+1}_{\mathrm{RBLieDer}}(L,\mathcal{V}) \text{  is given by}\\
    	&&\mathfrak{D}_{\mathrm{RBLieDer}}((f,g),h)=(d(f),d(g)+(-1)^n\partial (f),-d_R(h)-\Phi f),\quad \forall ((f,g),h)\in \mathrm{Hom}(L,V).
    \end{eqnarray*}
\end{enumerate}
Next in the following theorem we are in position to prove that $\mathfrak{D}_{\mathrm{RBLieDer}}$ is a coboundary map.
\begin{thm}
	The map $\mathfrak{D}_{\mathrm{RBLieDer}}$ is a coboundary operator, means that 
	\begin{equation*}
		\mathfrak{D}_{\mathrm{RBLieDer}}\circ \mathfrak{D}_{\mathrm{RBLieDer}}=0.
	\end{equation*}
\end{thm}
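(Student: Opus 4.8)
The plan is to prove $\mathfrak{D}_{\mathrm{RBLieDer}}\circ\mathfrak{D}_{\mathrm{RBLieDer}}=0$ by a direct slot-by-slot computation, reducing it to identities that are already available: $\mathrm{d}\circ\mathrm{d}=0$, $\mathrm{d}_{\mathrm{R}}\circ\mathrm{d}_{\mathrm{R}}=0$, and the two compatibilities \eqref{coboundary1} and \eqref{coboundary2}. First I would fix the generic range $n\geq 2$, take an arbitrary $((f,g),h)\in\mathfrak{C}^n_{\mathrm{RBLieDer}}(L;\mathcal{V})$, compute
\[
\mathfrak{D}_{\mathrm{RBLieDer}}((f,g),h)=\big((\mathrm{d}f,\ \mathrm{d}g+(-1)^n\partial f),\ -\mathrm{d}_{\mathrm{R}}h-\Phi f\big),
\]
and then apply $\mathfrak{D}_{\mathrm{RBLieDer}}$ once more using the degree-$(n{+}1)$ instance of the same formula.

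The three resulting slots are then handled independently. The first slot gives $\mathrm{d}(\mathrm{d}f)=\mathrm{d}^2f=0$, since $\mathrm{d}$ is the Chevalley--Eilenberg differential of $\mathfrak{L}$ with coefficients in $\mathcal{V}$. The second slot gives $\mathrm{d}\big(\mathrm{d}g+(-1)^n\partial f\big)+(-1)^{n+1}\partial(\mathrm{d}f)=(-1)^n\big(\mathrm{d}\circ\partial-\partial\circ\mathrm{d}\big)f$, which vanishes by \eqref{coboundary1}; this is exactly the square-zero property of the LieDer differential $\mathrm{D}$. The third slot gives $-\mathrm{d}_{\mathrm{R}}\big(-\mathrm{d}_{\mathrm{R}}h-\Phi f\big)-\Phi(\mathrm{d}f)=\mathrm{d}_{\mathrm{R}}^2h+\big(\mathrm{d}_{\mathrm{R}}\circ\Phi-\Phi\circ\mathrm{d}\big)f$; here $\mathrm{d}_{\mathrm{R}}^2=0$ because, by Theorem \ref{theorem needed in the cohomology}, $(\widehat{\mathcal{V}},\delta_{\mathrm{V}})$ is a genuine representation of the LieDer pair $(L_{\mathrm{R}},\delta)$ (indeed $\widetilde{\rho}$ is already a Lie-algebra representation of $L_{\mathrm{R}}$), so $\mathrm{d}_{\mathrm{R}}$ is an honest Chevalley--Eilenberg differential, while $\mathrm{d}_{\mathrm{R}}\circ\Phi=\Phi\circ\mathrm{d}$ is \eqref{coboundary2}. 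Hence all three slots vanish and $\mathfrak{D}_{\mathrm{RBLieDer}}^2((f,g),h)=0$.

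It then remains to dispose of the low-degree case: the $0$-cochains form the trivial group, and for $n=1$ the differential is given by the separate formula $\mathfrak{D}_{\mathrm{RBLieDer}}(f)=(\mathrm{d}f,-\partial f,-\Phi f)$. Feeding $((\mathrm{d}f,-\partial f),-\Phi f)$ into the degree-$2$ instance of the general formula and repeating the three computations with $n=2$ yields $\big(\mathrm{d}^2f,\ -\mathrm{d}\partial f+\partial\mathrm{d}f,\ \mathrm{d}_{\mathrm{R}}\Phi f-\Phi\mathrm{d}f\big)=(0,0,0)$, again by \eqref{coboundary1} and \eqref{coboundary2}. This completes the verification in every degree.

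I do not expect a genuine obstacle here: once Theorem \ref{theorem needed in the cohomology} (which gives $\mathrm{d}_{\mathrm{R}}^2=0$), the chain-map property \eqref{coboundary2} of $\Phi$, and the commutation \eqref{coboundary1} are in place, the statement is bookkeeping. The only delicate point is sign tracking --- ensuring that the $(-1)^n$ in the middle slot and the signs in $-\mathrm{d}_{\mathrm{R}}h-\Phi f$ are exactly those that make the cross terms cancel in pairs; the cleanest way to organize it is to view $\mathfrak{D}_{\mathrm{RBLieDer}}$ as the already-known square-zero operators $\mathrm{D}$ and $\mathfrak{d}_{\mathrm{R}}$ glued along $\Phi$, so that $\mathfrak{D}_{\mathrm{RBLieDer}}^2=0$ follows from $\mathrm{D}^2=0$, $\mathfrak{d}_{\mathrm{R}}^2=0$ and $\mathrm{d}_{\mathrm{R}}\circ\Phi=\Phi\circ\mathrm{d}$. (Should one enlarge the last slot so that $\partial$ acts on it as well, the further input \eqref{coboundary3}, $\partial\circ\mathrm{d}_{\mathrm{R}}=\mathrm{d}_{\mathrm{R}}\circ\partial$, would also be needed.)
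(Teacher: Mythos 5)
Your proposal is correct and follows essentially the same route as the paper: apply $\mathfrak{D}_{\mathrm{RBLieDer}}$ twice, split into the three slots, and kill them using $\mathrm{d}^2=0$, $\mathrm{d}_{\mathrm{R}}^2=0$, the commutation \eqref{coboundary1}, and the chain-map property \eqref{coboundary2}. Your sign bookkeeping in the middle slot is in fact cleaner than the paper's (which contains a typo, writing $(-1)^{n+1}(f)$ where $(-1)^{n+1}\partial(\mathrm{d}f)$ is meant), and your explicit treatment of the $n=1$ case is a small but welcome addition.
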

\begin{proof}
	For $n\geq1$, using equations \eqref{coboundary1} and \eqref{coboundary2}
	\begin{align*}
		\mathfrak{D}_{\mathrm{RBLieDer}}\circ\mathfrak{D}_{\mathrm{RBLieDer}}((f,g),h)&=\mathfrak{D}_{\mathrm{RBLieDer}}(d(f),d(g)+(-1)^n\partial (f),-d_R(h)-\Phi f)\\
		&=(d^2(f),d^2(g)+(-1)^nd\circ\partial(f)+(-1)^{n+1}(f),d^2_R(h)+d_R\circ\Phi(f)-\Phi\circ d(f))\\
		&=(0,0+(-1)^nd\circ\partial(f)+(-1)^{n+1}(f),0+d_R\circ\Phi(f)-\Phi\circ d(f))\\
		&=(0,0,0)
	\end{align*}
This complete the proof.
\end{proof}
With respect to the representation $(\mathcal{V},\delta_V,T)$ we obtain a complex $\{\mathfrak{C}^{\star}_{\mathrm{RBLieDer}}(L,\mathcal{V}),\mathfrak{D}_{\mathrm{RBLieDer}}\}$. Let $\mathcal{Z}^n_{\mathrm{RBLieDer}}(L,\mathcal{V})$ and $\mathcal{B}^n_{\mathrm{RBLieDer}}(L,\mathcal{V})$ denote the space of $n$-cocycles and $n$-coboundaries, respectively. Then we define the corresponding cohomology groups by 
\begin{equation*}
	\mathcal{H}^n_{\mathrm{RBLieDer}}(L,\mathcal{V}):=\frac{\mathcal{Z}^n_{\mathrm{RBLieDer}}(L,\mathcal{V})}{\mathcal{B}^n_{\mathrm{RBLieDer}}(L,\mathcal{V})},\quad \text{for } n\geq0.
\end{equation*}
They are called the cohomolgy of $\lambda$-weighted Rota-Baxter LieDer pair $(\mathfrak{L},\delta,R)$ with coefficients in the representation $(\mathcal{V},\delta_{\mathrm{V}},T)$.\\
\begin{re}
	Recall that from \cite{A6}, if $(\mathfrak{L},R)$ is a Rota-Baxter Lie algebra and $(\mathcal{V},T)$ is a representation of it. An element $v\in V$ is in $\mathrm{Z}^0_{RB}(L,\mathcal{V})$ if and only if $(d_{\mathrm{R}}(v),-v)=0$, this holds when $v=0$ which means that $\mathrm{H}^0_{RB}(L,\mathcal{V})=0$ and it coincide in our paper with $\mathrm{H}^0_{RBLieDer}(L,\mathcal{V})=0=\mathrm{H}^0_{RB}(L,\mathcal{V})$. \\
	Also $\mathrm{H}^1_{RBLieDer}(L,\mathcal{V})$ coincide with $\mathrm{H}^1_{RB}(L,\mathcal{V})=\frac{\mathrm{Der}(L,\mathcal{V})}{\mathrm{InnDer}(L,\mathcal{V})}$.
\end{re}
\begin{prop}
	Let $(\mathcal{V},\delta_{\mathrm{V}},T)$ be a representation of a $\lambda$-weighted Rota-Baxter LieDer pair $(\mathfrak{L},\delta,R)$. Then we have 
	\begin{equation*}
		\mathcal{H}^1_{\mathrm{RBLieDer}}(L,\mathcal{V})=\{f;\quad f\in\mathcal{Z}^1(L,\mathcal{V}),\quad f\circ\delta=\delta_V\circ f,\quad f\circ R=T\circ f\}.
	\end{equation*}
\end{prop}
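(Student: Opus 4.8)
The plan is to compute $\mathcal{Z}^1_{\mathrm{RBLieDer}}(L,\mathcal{V})$ directly and to observe that there are no nontrivial $1$-coboundaries. Since the space of $\lambda$-weighted Rota-Baxter LieDer pair $0$-cochains is $0$, we have $\mathcal{B}^1_{\mathrm{RBLieDer}}(L,\mathcal{V})=0$, whence $\mathcal{H}^1_{\mathrm{RBLieDer}}(L,\mathcal{V})=\mathcal{Z}^1_{\mathrm{RBLieDer}}(L,\mathcal{V})$; it then remains only to unwind the cocycle condition on a $1$-cochain.

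Next I would spell out $\mathfrak{D}_{\mathrm{RBLieDer}}$ on a $1$-cochain $f\in\mathrm{Hom}(L,V)=\mathfrak{C}^1_{\mathrm{RBLieDer}}(L,\mathcal{V})$. By definition $\mathfrak{D}_{\mathrm{RBLieDer}}(f)=(\mathrm{d}(f),-\partial(f),-\Phi(f))$, so $f$ is a $1$-cocycle precisely when the three conditions $\mathrm{d}(f)=0$, $\partial(f)=0$ and $\Phi(f)=0$ hold simultaneously. The task is therefore to identify what each of these three vanishing conditions says.

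Then I would evaluate the three conditions at $n=1$. The identity $\mathrm{d}(f)=0$ is exactly the Chevalley-Eilenberg $1$-cocycle condition, i.e.\ $f\in\mathcal{Z}^1(L,\mathcal{V})$. For the second, at $n=1$ the operator $\partial$ reduces to $\partial f=f\circ\delta-\delta_{\mathrm{V}}\circ f$, so $\partial(f)=0$ is equivalent to $f\circ\delta=\delta_{\mathrm{V}}\circ f$. For the third, at $n=1$ the double sum in the definition \eqref{cohomology of RBO} of $\Phi$ collapses: only the index $k=0$ occurs, and the inner sum over the empty range $i_1<\cdots<i_k$ contributes the single term $\lambda^{0}\,T(f(x))=T(f(x))$, so $\Phi(f)(x)=f(Rx)-T(f(x))$ and $\Phi(f)=0$ is equivalent to $f\circ R=T\circ f$. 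Combining the three equivalences yields the asserted description of $\mathcal{H}^1_{\mathrm{RBLieDer}}(L,\mathcal{V})$.

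All the computations are routine; the only place requiring a little care is the bookkeeping in $\Phi$ at $n=1$, namely checking that the inner sum over the empty index range ($k=0$) contributes exactly the one term $T(f(x))$ and nothing further, since this is precisely where the description of $\mathcal{H}^1$ picks up the condition $f\circ R=T\circ f$.
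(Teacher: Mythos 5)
Your proposal is correct, and in fact the paper states this proposition without giving any proof; your direct unwinding of the definitions (using that the $0$-cochains vanish so $\mathcal{H}^1=\mathcal{Z}^1$, and then reading off $\mathrm{d}(f)=0$, $\partial(f)=f\circ\delta-\delta_{\mathrm{V}}\circ f=0$, and $\Phi(f)(x)=f(Rx)-T(f(x))=0$ from $\mathfrak{D}_{\mathrm{RBLieDer}}(f)=(\mathrm{d}(f),-\partial(f),-\Phi(f))$) is exactly the intended argument. The careful treatment of the $k=0$ term in $\Phi$ at $n=1$ is the only point that needed checking, and you handled it correctly.
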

\subsection{Cohomology of weighted Rota-Baxter AssDer pairs}
In \cite{K0} authors defined the cohomology of a weighted Rota-Baxter associative algebra with coefficients in a Rota-Baxter bimodule. Later Das.A studied the associative algebra with derivation and denote it by AssDer pairs \cite{A5}.\\
In this subsection we study the cohomology of $\lambda$-weighted Rota-Baxter AssDer pair and we show that this cohomolgy is related to the cohomology of Rota-Baxter LieDer pairs by suitable skew-symmetrization.\\
Let $(\mathfrak{A}=(A,\mu))$ be an associative algebra and $\delta:A\rightarrow A$ be a derivation on it. Recall that an AssDer  pair $(\mathfrak{A},\delta)$ is an associative algebra equipped with the derivation $\delta$.\\
 A linear map $R:A \rightarrow A$ is said to be a $\lambda$-weighted Rota-Baxter operator on $\mathfrak{A}$ if it satisfies 
\begin{equation}\label{RBO on Ass Algebras}
	\mu(Rx,Ry)=R(\mu(Rx,y)+\mu(x,Ry)+\lambda \mu(x,y)),\quad \forall x,y\in L.
\end{equation}
A bimodule over $\mathfrak{A}$ consists of a vector space $M$ together with two linear maps $l:A\otimes M\rightarrow M$ and $r:M\otimes A\rightarrow M$ such that for all $x,y\in A$ and $m\in M$
\begin{eqnarray*}
	l(\mu(x,y),m)&=&l(x,l(y,m)),\\
	r(l(x,m),y)&=&l(x,r(m,y))\\
	r(r(m,x),y)&=&r(m,\mu(x,y)).
\end{eqnarray*}
We will write $xm$ instead of $l(x,m)$ and $mx$ instead of $r(m,x)$ when there are no confusions.
\begin{defn}
	A weighted Rota-Baxter AssDer pair consisits of an AssDer pair $(\mathfrak{A},\delta)$ together with a weighted Rota-Baxter operator such that 
	\begin{equation*}
		R\circ \delta=\delta\circ R.
	\end{equation*}
\end{defn}
\begin{defn}
	Let $(\mathfrak{A}_1,\delta_1,R_1)$ and $(\mathfrak{A}_2,\delta_2,R_2)$ be two $\lambda$-weighted Rota-Baxter AssDer pairs. Then the linear map $f:A_1\rightarrow A_2$ is said to be a $\lambda$-weighted Rota-Baxter homomorphism if is satisfies the following, for $x,y\in A$
	\begin{eqnarray*}
		f\circ\mu_1(x,y)&=&\mu_2(f(x),f(y)),\\
		f\circ\delta_1&=&\delta_1\circ f,\\
		f\circ R_1&=&R_1\circ f.
	\end{eqnarray*}
\end{defn}
\begin{defn}
	Let $(\mathfrak{A},\delta,R)$ be a $\lambda$-weighted Rota-Baxter AssDer pair. A bimodule (representation) over it is a triple $(M,\delta_\mathrm{M},T)$ which is both a left and right module on $(\mathfrak{A},\delta,R)$ and $M$ is an $A$-bimodule. This means the followings, for all $x,y\in A$ and $m\in M$
	\begin{eqnarray}
		\delta_\mathrm{M}(xm)&=&\delta(x)m+x\delta_\mathrm{M}(m),\label{AssDer rep 1}\\
		\delta_\mathrm{M}(mx)&=&\delta_\mathrm{M}(m)x+m\delta(x)\label{AssDer rep 2}\\
		R(x)T(m)&=&T(R(x)m+xT(m)+\lambda xm)\\
		T(m)R(x)&=&T(T(m)x+mR(x)++\lambda mx)\\
		\delta_\mathrm{M}\circ T&=&T\circ \delta_\mathrm{M} \label{AssDer rep 5}.
	\end{eqnarray} 
\end{defn}
\begin{prop}
	Let $(\mathfrak{A},\delta,R)$ be a $\lambda$-weighted Rota-Baxter AssDer pair and $(M,\delta_\mathrm{M},T)$ be a representation of it. Then $(A\oplus M,\delta\oplus\delta_\mathrm{M},R\oplus T)$ is a $\lambda$-weighted Rota-Baxter AssDer pair where its structure is given by for all $x,y\in A$ and $m,n\in M$
	\begin{eqnarray*}
		\mu_\ltimes(x+m,y+n)&=&\mu(x,y)+xn+my,\\
		(\delta\oplus\delta_\mathrm{M})(x+m)&=&\delta(x)+\delta_\mathrm{M}(m)\\
		(R\oplus T)(x+m)&=&R(x)+T(m).
	\end{eqnarray*}
\end{prop}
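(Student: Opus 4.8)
The plan is to verify the three defining axioms of a $\lambda$-weighted Rota-Baxter AssDer pair for the triple $(A\oplus M,\delta\oplus\delta_{\mathrm M},R\oplus T)$: namely that $\mu_\ltimes$ is associative with $A\oplus M$ a bimodule over itself (the semidirect-product associative structure), that $\delta\oplus\delta_{\mathrm M}$ is a derivation of $\mu_\ltimes$, and that $R\oplus T$ is a $\lambda$-weighted Rota-Baxter operator on $(A\oplus M,\mu_\ltimes)$ commuting with $\delta\oplus\delta_{\mathrm M}$. The associativity of $\mu_\ltimes$ and the fact that $(A\oplus M,\mu_\ltimes)$ is a semidirect-product associative algebra is the classical Hochschild construction and follows directly from the three bimodule axioms listed before the Definition of a representation; I would state this and move on without recomputing it.

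Next I would check that $\delta\oplus\delta_{\mathrm M}$ is a derivation. Expanding,
\begin{align*}
(\delta\oplus\delta_{\mathrm M})\big(\mu_\ltimes(x+m,y+n)\big)
&=(\delta\oplus\delta_{\mathrm M})\big(\mu(x,y)+xn+my\big)\\
&=\delta(\mu(x,y))+\delta_{\mathrm M}(xn)+\delta_{\mathrm M}(my),
\end{align*}
and using that $\delta$ is a derivation of $\mu$ together with \eqref{AssDer rep 1} and \eqref{AssDer rep 2} this equals
\[
\mu(\delta x,y)+\mu(x,\delta y)+(\delta x)n+x\,\delta_{\mathrm M}(n)+\delta_{\mathrm M}(m)y+m\,\delta(y),
\]
which is exactly $\mu_\ltimes\big((\delta\oplus\delta_{\mathrm M})(x+m),y+n\big)+\mu_\ltimes\big(x+m,(\delta\oplus\delta_{\mathrm M})(y+n)\big)$.

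Then I would verify the Rota-Baxter identity for $R\oplus T$. Writing $P=R\oplus T$, one computes $\mu_\ltimes(P(x+m),P(y+n))=\mu_\ltimes(Rx+Tm,Ry+Tn)=\mu(Rx,Ry)+(Rx)(Tn)+(Tm)(Ry)$, and separately
\[
P\Big(\mu_\ltimes(P(x+m),y+n)+\mu_\ltimes(x+m,P(y+n))+\lambda\,\mu_\ltimes(x+m,y+n)\Big);
\]
expanding the argument componentwise, the $A$-component reproduces \eqref{RBO on Ass Algebras} for $R$, the ``left'' $M$-component reproduces the third representation axiom $R(x)T(m)=T(R(x)m+xT(m)+\lambda xm)$ applied with the roles of $m,n$ appropriately placed, and the ``right'' $M$-component reproduces the fourth axiom $T(m)R(x)=T(T(m)x+mR(x)+\lambda mx)$. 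Matching these term by term gives the identity. Finally, $(R\oplus T)\circ(\delta\oplus\delta_{\mathrm M})=(\delta\oplus\delta_{\mathrm M})\circ(R\oplus T)$ follows at once from $R\circ\delta=\delta\circ R$ on $A$ and from \eqref{AssDer rep 5} on $M$.

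None of the steps presents a genuine obstacle; the only place requiring care is the bookkeeping in the Rota-Baxter identity, where one must keep the two one-sided module contributions separate and be careful that the representation axioms as stated (with the typo ``$++\lambda mx$'' read as $+\lambda mx$) are matched with the correct sign and the correct side. I would therefore present the associativity and the derivation property briefly and devote the bulk of the written proof to displaying the expansion of both sides of the Rota-Baxter equation on $A\oplus M$ and observing the term-by-term correspondence with \eqref{RBO on Ass Algebras} and the module axioms.
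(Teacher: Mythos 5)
Your proposal is correct and follows the same route as the paper: direct verification of the semidirect-product axioms, with the associativity part delegated to the classical construction. The paper's own proof is essentially just a citation (to the AssDer-pair semidirect product of Das--Mandal and to the standard fact that $R\oplus T$ is a weighted Rota-Baxter operator), so your explicit check of the derivation property, the componentwise Rota-Baxter identity, and the commutation $(R\oplus T)\circ(\delta\oplus\delta_{\mathrm M})=(\delta\oplus\delta_{\mathrm M})\circ(R\oplus T)$ simply supplies the details the paper leaves to the references.
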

\begin{proof}
	the proof is easy to check because it is known that $A\oplus M$ with the product $\mu_\ltimes$ and the linear map $\delta\oplus\delta_\mathrm{M}$ is a LieDer pair, see \cite{A5}. And it is well know that $R\oplus T$ is a $\lambda$-weighted Rota-Baxter operator.
\end{proof}
\begin{prop}
	Let $(\mathfrak{A},\delta,R)$ be a weighted Rota-Baxter AssDer pair. Define a new binary operation as follow :
	\begin{equation}
		\mu_\mathrm{R}:=\mu(x,Ry)+\mu(Rx,y)+\lambda \mu(x,y),\quad \forall x,y\in A.
	\end{equation}
Then
\begin{enumerate}
	\item[1)] The operation $\mu_\mathrm{R}$ forms an AssDer pair $(A,\mu_\mathrm{R},\delta)$ together with the derivation $\delta$.
	\item [2)] The quadruple $(A,\mu_\mathrm{R},\delta,R)$ forms a weighted Rota-Baxter AssDer pair and it is denoted $(\mathfrak{A}_\mathrm{R},\delta,R)$.
	\item[3)] The map $R:(A,\mu_\mathrm{R},\delta,R)\rightarrow(A,\mu,\delta,R)$ is a morphism of weighted Rota-Baxter AssDer pairs.
\end{enumerate}
\end{prop}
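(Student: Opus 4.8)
The plan is to dispatch the three claims in turn; in each, the purely associative / Rota--Baxter content is standard for weighted Rota--Baxter associative algebras and can be quoted from \cite{K0}, so the only genuinely new point is compatibility with the derivation $\delta$, which will rest entirely on the hypothesis $R\circ\delta=\delta\circ R$.

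For (1) I would first recall that $\mu_R$ is associative: expanding $\mu_R(\mu_R(x,y),z)$ and $\mu_R(x,\mu_R(y,z))$ by the associativity of $\mu$ and using the identity $R(\mu_R(x,y))=\mu(Rx,Ry)$ --- which is precisely \eqref{RBO on Ass Algebras} --- both sides collapse to the same seven-term sum $\mu(x,\mu(Ry,Rz))+\mu(Rx,\mu(y,Rz))+\mu(Rx,\mu(Ry,z))+\lambda\mu(x,\mu(y,Rz))+\lambda\mu(x,\mu(Ry,z))+\lambda\mu(Rx,\mu(y,z))+\lambda^2\mu(x,\mu(y,z))$. Then I would verify that $\delta$ is a derivation for $\mu_R$ by the short computation
\begin{align*}
\delta(\mu_R(x,y))&=\mu(\delta x,Ry)+\mu(x,\delta Ry)+\mu(\delta Rx,y)+\mu(Rx,\delta y)+\lambda\mu(\delta x,y)+\lambda\mu(x,\delta y)\\
&=\big(\mu(\delta x,Ry)+\mu(R\delta x,y)+\lambda\mu(\delta x,y)\big)+\big(\mu(x,R\delta y)+\mu(Rx,\delta y)+\lambda\mu(x,\delta y)\big)\\
&=\mu_R(\delta x,y)+\mu_R(x,\delta y),
\end{align*}
whose only non-formal step is rewriting $\delta Rx=R\delta x$ and $\delta Ry=R\delta y$ via $R\circ\delta=\delta\circ R$. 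This gives the AssDer pair $(A,\mu_R,\delta)$.

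For (2) it remains to check that $R$ is a $\lambda$-weighted Rota--Baxter operator on $(A,\mu_R)$, i.e. $\mu_R(Rx,Ry)=R\big(\mu_R(Rx,y)+\mu_R(x,Ry)+\lambda\mu_R(x,y)\big)$. I would expand the left side as $\mu(R^2x,Ry)+\mu(Rx,R^2y)+\lambda\mu(Rx,Ry)$ and apply \eqref{RBO on Ass Algebras} to each of the three products, and on the right side first collect $\mu_R(Rx,y)+\mu_R(x,Ry)+\lambda\mu_R(x,y)$ and then apply $R$; both sides reduce to $R$ applied to $\mu(R^2x,y)+2\mu(Rx,Ry)+\mu(x,R^2y)+2\lambda\mu(Rx,y)+2\lambda\mu(x,Ry)+\lambda^2\mu(x,y)$. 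Together with (1) and the hypothesis $R\circ\delta=\delta\circ R$, this makes $(A,\mu_R,\delta,R)$ a weighted Rota--Baxter AssDer pair, denoted $(\mathfrak{A}_R,\delta,R)$.

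Finally, for (3) I note that $R(\mu_R(x,y))=R\big(\mu(x,Ry)+\mu(Rx,y)+\lambda\mu(x,y)\big)=\mu(Rx,Ry)$ is just \eqref{RBO on Ass Algebras}, so $R$ is an algebra homomorphism $(A,\mu_R)\to(A,\mu)$; the remaining conditions $R\circ\delta=\delta\circ R$ and $R\circ R=R\circ R$ hold trivially, so $R$ is a morphism of weighted Rota--Baxter AssDer pairs. I expect no real obstacle here: the associativity and Rota--Baxter expansions in (1) and (2) are routine bookkeeping identical to the plain weight-$\lambda$ associative setting, and the derivation compatibility --- the one new ingredient relative to \cite{K0} --- is the two-line computation displayed above, which uses nothing beyond $R\circ\delta=\delta\circ R$.
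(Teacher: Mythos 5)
Your proposal is correct and takes essentially the same route as the paper: the one genuinely new verification --- that $\delta$ is a derivation for $\mu_\mathrm{R}$, reduced via $R\circ\delta=\delta\circ R$ --- is exactly the computation the paper performs. The associativity of $\mu_\mathrm{R}$, the Rota--Baxter identity for $R$ on $(A,\mu_\mathrm{R})$, and the morphism property of $R$, which you write out explicitly (and correctly), are precisely the parts the paper delegates to the literature (\cite{L1}, Theorem 1.1.17).
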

\begin{proof}
	All we need to prove is that $\delta$ is a derivation on the operation $\mu_\mathrm{R}$, the rest of the proof see (\cite{L1},Theorem 1.1.17).\\
	Let $x,y\in A$
	\begin{align*}
		\delta(\mu_\mathrm{R}(x,y))&=\delta(\mu(x,Ry)+\mu(Rx,y)+\lambda \mu(x,y))\\
		&=\mu(\delta x,Ry)+\mu(x,\delta\circ Ry)+\mu(\delta\circ Rx,y)+\mu(Rx,\delta y)+\lambda \mu(\delta x,y)+\lambda \mu(x,\delta y)\\
		&\overset{\ref{condition1 RBLieDer pair}}{=}\mu(\delta x,Ry)+\mu(R\circ \delta x,y)+\lambda \mu(\delta x,y)+\mu(x,R\circ\delta y)+\mu(Rx,\delta y)+\lambda \mu(x,\delta y)\\
		&=\mu_\mathrm{R}(\delta x,y)+\mu_\mathrm{R}(x,\delta y).
	\end{align*}
This complete the proof.
\end{proof}

\begin{prop}
	Let $(\mathfrak{A},\delta,R)$ be a weighted Rota-Baxter AssDer pair and $(M,\delta_\mathrm{M},T)$ be weighted Rota-Baxter bimodule over it. Define a left action $l_\mathrm{R}$ and a right action $r_\mathrm{R}$ of $A$ on $M$ as follows, for all $x\in A,m\in M$
	\begin{eqnarray*}
		l_\mathrm{R}(x,m)&=&l(Rx,m)-T(L(x,m))\\
		r_\mathrm{R}(m,x)&=&r(m,Rx)-T(r(m,x)).
	\end{eqnarray*}
Then these actions makes $M$ into weighted Rota-Baxter bimodule over $(\mathfrak{A}_\mathrm{R},\delta,R)$ and denote it this new bimodule by $(M_\mathrm{R},\delta_M,T)$.
\end{prop}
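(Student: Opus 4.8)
The plan is to verify directly that the triple $(M_{\mathrm{R}}, \delta_M, T)$, equipped with the actions $l_{\mathrm{R}}$ and $r_{\mathrm{R}}$, satisfies all the axioms in the definition of a weighted Rota-Baxter bimodule over $(\mathfrak{A}_{\mathrm{R}}, \delta, R)$, namely the three bimodule axioms over the associative algebra $(A, \mu_{\mathrm{R}})$, the two derivation-compatibility conditions \eqref{AssDer rep 1} and \eqref{AssDer rep 2}, the two Rota-Baxter compatibility conditions for $l_{\mathrm{R}}$ and $r_{\mathrm{R}}$, and the commutation condition \eqref{AssDer rep 5}. The first group of conditions, that $(M, l_{\mathrm{R}}, r_{\mathrm{R}})$ is an $(A,\mu_{\mathrm{R}})$-bimodule, is precisely the associative-algebra analogue of what was already recorded in Theorem \ref{theorem needed in the cohomology} for the Lie case, and can be cited from \cite{K0} (or \cite{L1}); I would invoke it rather than reprove it. Likewise, the two Rota-Baxter compatibility axioms for $l_{\mathrm{R}}, r_{\mathrm{R}}$ with respect to $R$ and $T$ are the bimodule counterpart of the Rota-Baxter axiom and follow from the same source. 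So the genuinely new content to check is that $\delta_M$ is still a derivation for the deformed actions and that it still commutes with $T$.

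The condition \eqref{AssDer rep 5}, $\delta_M \circ T = T \circ \delta_M$, is unchanged: it is a statement about $\delta_M$ and $T$ alone and does not involve the actions, so it is inherited verbatim from the hypothesis that $(M,\delta_M,T)$ is a bimodule over $(\mathfrak{A},\delta,R)$. The substantive step is \eqref{AssDer rep 1} for $l_{\mathrm{R}}$, i.e. to show
\begin{equation*}
	\delta_M\big(l_{\mathrm{R}}(x,m)\big) = l_{\mathrm{R}}(\delta x, m) + l_{\mathrm{R}}(x, \delta_M m), \qquad x\in A,\ m\in M,
\end{equation*}
and symmetrically \eqref{AssDer rep 2} for $r_{\mathrm{R}}$. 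For the left action I would expand $\delta_M\big(l(Rx,m) - T(l(x,m))\big)$ using linearity, then apply \eqref{AssDer rep 1} for the original action $l$ to the term $\delta_M(l(Rx,m))$, producing $l(\delta Rx, m) + l(Rx, \delta_M m)$; next use $R\circ\delta = \delta\circ R$ (condition \eqref{condition1 RBLieDer pair}) to rewrite $\delta Rx = R\delta x$; then push $\delta_M$ past $T$ on the second term via \eqref{AssDer rep 5} and apply \eqref{AssDer rep 1} again inside, getting $T\big(l(\delta x, m) + l(x, \delta_M m)\big)$. Regrouping the resulting six terms into $\big(l(R\delta x, m) - T(l(\delta x, m))\big) + \big(l(Rx, \delta_M m) - T(l(x, \delta_M m))\big)$ yields exactly $l_{\mathrm{R}}(\delta x, m) + l_{\mathrm{R}}(x, \delta_M m)$. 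The computation for $r_{\mathrm{R}}$ is entirely parallel, with $\delta$ acting on the right-hand argument.

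The main obstacle is essentially bookkeeping: keeping the roles of $R$, $T$, $\delta$, $\delta_M$ straight through the expansion, and making sure the correct compatibility axiom (\eqref{AssDer rep 1} for $l$, \eqref{AssDer rep 2} for $r$, \eqref{AssDer rep 5} for the $T$–$\delta_M$ swap, \eqref{condition1 RBLieDer pair} for the $R$–$\delta$ swap on $A$) is invoked at each step. There is no conceptual difficulty: every identity needed is already available either as a hypothesis on $(M,\delta_M,T)$ or as part of the weighted Rota-Baxter AssDer pair structure of $(\mathfrak{A},\delta,R)$, and the associative-algebra and Rota-Baxter parts of the bimodule axioms are quoted from \cite{K0} and \cite{L1}. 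I would present the two derivation identities in full and simply remark that the remaining axioms are either cited or immediate.
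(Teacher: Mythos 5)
Your proposal is correct and follows essentially the same route as the paper: the bimodule and Rota-Baxter axioms for $l_{\mathrm{R}},r_{\mathrm{R}}$ are quoted from the weighted Rota-Baxter literature, and the only computation carried out is the derivation compatibility \eqref{AssDer rep 1} (and symmetrically \eqref{AssDer rep 2}), expanded exactly as you describe via \eqref{condition1 RBLieDer pair} and \eqref{AssDer rep 5}. Your treatment is in fact slightly more complete, since you explicitly note that \eqref{AssDer rep 5} is inherited verbatim, which the paper leaves implicit.
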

\begin{proof}
	It is known that $l_\mathrm{R}$ (respectively $r_\mathrm{R}$) is a left action (respectively right action) of $A$ on $M_\mathrm{R}$. So we need just to prove equations \eqref{AssDer rep 1} and \eqref{AssDer rep 2}.\\
	Let $x\in A,m\in M$
	\begin{align*}
		\delta_\mathrm{M}\circ l_\mathrm{R}&=\delta_\mathrm{M}(l(R(x),m)-T(l(x,m)))\\
		&\overset{\ref{condition1 RBLieDer pair}}{=}l(R\circ \delta x,m)+l(Rx,\delta_\mathrm{M}(m))-T\circ \delta_\mathrm{M}(l(x,m))\\
		&\overset{\ref{AssDer rep 5}}{=}l(R\circ \delta x,m)-T(l(\delta x,m))+l(Rx,\delta_\mathrm{M}(m))-T(l(x,\delta_\mathrm{M}(m)))\\
		&=l_\mathrm{R}(\delta x,m)+l_\mathrm{R}(x,\delta_\mathrm{M}(m))
	\end{align*}Similarly to $r_\mathrm{R}$.
\end{proof}
Next we will define a cohomology for weighted Rota-Baxter AssDer pairs.\\
Let $(\mathfrak{A},\delta)$ be an AssDer pair and $(M,\delta_M)$ be a representation of it. Recall from \cite{K0}, the Hochshild cochain complex of $\mathfrak{A}_\mathrm{R}$ with coefficients in $M_\mathrm{R}$ is given as follow 
\begin{equation*}
	\mathrm{d_{R.Hoch}}:C^n(\mathfrak{A}_\mathrm{R},M_\mathrm{R})\rightarrow C^{n+1}(\mathfrak{A}_\mathrm{R},M_\mathrm{R}) \text{ by }
\end{equation*}
\begin{align*}
		\mathrm{d_{R.Hoch}}(f)(x_1,\ldots,x_{n+1})&=
	l_\mathrm{R}(x_1,f(x_2,\ldots,x_{n+1}))+\displaystyle\sum_{i=1}^n(-1)^if(x_1,\ldots,x_{i-1},\mu_\mathrm{R}(x_i,x_{i+1}),\ldots,x_{n+1})\\
	&+(-1)^{n+1}r_\mathrm{R}(f(x_1,\ldots,x_n),x_{n+1}).
\end{align*}
Define $\partial$ the coboundary operator from $C^n(A,M)$ to $C^n(A,M)$ and it is given by 
\begin{equation*}
	\partial (f)=\displaystyle\sum_{i=1}^n f\circ (\mathbf{1}\otimes \ldots \delta \otimes \ldots \otimes \mathbf{1})-\delta_\mathrm{M}\circ f.
\end{equation*}
First step let's define the cohomology, in context of AssDer pair structure, of the weighted Rota-Baxter operator $R$ with coefficients in the representation $(M_\mathrm{R},\delta_\mathrm{M},T)$.\\
Consider the cochain complex of the AssDer pair $(\mathfrak{A}_\mathrm{R},\delta)$ with coefficients in the representation $(M_\mathrm{R},\delta_M)$ 
\begin{equation*}
	\mathfrak{C}_\mathrm{AssDer}^\star(\mathfrak{A}_\mathrm{R},M_\mathrm{R}):=\oplus_{n\geq0}\mathfrak{C}_\mathrm{AssDer}^n(\mathfrak{A}_\mathrm{R},M_\mathrm{R})
\end{equation*}
The space $\mathfrak{C}_\mathrm{AssDer}^n(\mathfrak{A}_\mathrm{R},M_\mathrm{R})$ of $n$-cochains is defined as follow 
		$$
\left\{
\begin{array}{ll}
	\mathfrak{C}_\mathrm{AssDer}^0(\mathfrak{A}_\mathrm{R},M_\mathrm{R})=0,&\\
	\mathfrak{C}_\mathrm{AssDer}^1(\mathfrak{A}_\mathrm{R},M_\mathrm{R})=\mathrm{Hom}(\mathfrak{A}_\mathrm{R},M_\mathrm{R}),&\\
		\mathfrak{C}_\mathrm{AssDer}^n(\mathfrak{A}_\mathrm{R},M_\mathrm{R})=C^n(\mathfrak{A}_\mathrm{R},M_\mathrm{R})\oplus C^{n-1}(\mathfrak{A}_\mathrm{R},M_\mathrm{R}),& \text{for } n\geq2.
\end{array}
\right.
$$
With the coboundary map is given by $\mathfrak{d}_{R.A.D}:\mathfrak{C}_\mathrm{R.AssDer}^n(\mathfrak{A}_\mathrm{R},M_\mathrm{R})\rightarrow \mathfrak{C}_\mathrm{R.AssDer}^{n+1}(\mathfrak{A}_\mathrm{R},M_\mathrm{R})$ as follow

\begin{equation}\label{coboundary AssDer of d.R}
	\mathfrak{d}_{R.A.D}(f,g)=(d_\mathrm{R.Hoch}(f),d_\mathrm{R.Hoch}(g)+(-1)^n\partial (f)),\quad \forall (f,g)\in \mathfrak{C}_\mathrm{R.AssDer}^n(\mathfrak{A}_\mathrm{R},M_\mathrm{R}).
\end{equation}

By \cite{R0} and since  $d_\mathrm{R.Hoch}$ is Hochschild coboundary \cite{K0}, it is easy to check that 
\begin{eqnarray}
	\mathfrak{d}_{R.A.D}\circ \mathfrak{d}_{R.A.D}&=&0,\\
	d_\mathrm{R.Hoch}\circ \partial&=&\partial \circ d_\mathrm{R.Hoch}.
\end{eqnarray}
with the previous result we are able to define the cohomology of Rota-Baxter operator $R$ with coefficients in $(M_\mathrm{R},\delta_\mathrm{M},T)$ in the context of AssDer pair structures.
\begin{defn}
	Let $(\mathfrak{A},\delta,R)$ be a weighted Rota-Baxter AssDer pair and $(M,\delta_\mathrm{M},T)$ be a representation of it. Then the cocchain complex $(\mathfrak{C}_\mathrm{R.B.AssDer}(\mathfrak{A}_\mathrm{R},M_\mathrm{R}),\mathfrak{d}_{R.A.D})$ is called the cochain complex of weighted Rota-Baxter operator $R$ with coefficients in $(M,\delta_\mathrm{M},T)$, denoted by $\mathfrak{C}_\mathrm{R.AssDer}^\star(A,M)$ and its comology is denoted by $\mathcal{H}_\mathrm{R.B.AssDer}(A,M)$, are called the cohomology of weighted Rota-Baxter operator $R$ with coefficients in $(M,\delta_\mathrm{M},T)$.
\end{defn} 
\begin{re}
	When $(M,\delta_\mathrm{M},T)=(A,\delta,R)$ is the adjoint representation, then we denote the cochain complex of weighted Rota-Baxter operator $R$ by $\mathfrak{C}^\star_\mathrm{R.B.AssDer}(A)$ and its cohomology groups are denoted simply by $\mathcal{H}^\star_\mathrm{R.B.AssDer}(A)$.
\end{re}

At the final step, we will combine the cohomology of AssDer pairs with the cohomology of weighted  Rota-Bxater operators to define a cohomology theory for weighted Rota-Baxter AssDer pair.\\
Define the map $\Phi^\star:C^\star(A,M)\rightarrow C^\star(\mathfrak{A}_\mathrm{R},M_\mathrm{R})$ by \\
$\Phi^0=Id_\mathrm{M}$ and for $n\geq1$ and $f\in C^n(A,M)$ define $\Phi^n(f)\in C^n(\mathfrak{A}_\mathrm{R},M_\mathrm{R})$ as follow 
\begin{equation*}
	\Phi^n(f)(x_1,\cdots,x_n)=f(Rx_1,\cdots,Rx_n)-\displaystyle\sum_{k=0}^{n-1}\lambda^{n-k-1}\displaystyle\sum_{i_1<\ldots<i_k}T\circ f(x_1,\cdots,R(x_{i_1}),\cdots,R(x_{i_k}),\cdots,x_n)
\end{equation*}
From (\cite{K0}, proposition 5.2), the map $\Phi^\star$ is a chain map.
\begin{defn}
	Let $(M,\delta_\mathrm{M},T)$ be a representation of a weighted Rota-Baxter AssDer pair $(\mathfrak{A},\delta,R)$. Define the cochain complex $(\mathfrak{C}^\star_\mathrm{R.B.AssDer}(A,M),\mathfrak{D}_\mathrm{R.B.AssDer})$ of weighted Rota-Baxter AssDer pair $(\mathfrak{A},\delta,R)$ with coeffecients in $(M,\delta_\mathrm{M},T)$ such that 
	\begin{equation*}
		\mathfrak{C}^0_\mathrm{R.B.AssDer}(A,M)=0 \text{ and } \mathfrak{C}^n_\mathrm{R.B.AssDer}(A,M):=\mathfrak{C}^n_\mathrm{AssDer}(A,M)\oplus C^{n-1}(\mathfrak{A}_\mathrm{R},M_\mathrm{R}),\quad \forall n\geq1.
	\end{equation*}
With  $\mathfrak{C}^n_\mathrm{R.B.AssDer}(A,M)=(C^n(A,M)\oplus C^{n-1}(A,M))\oplus C^{n-1}(\mathfrak{A}_\mathrm{R},M_\mathrm{R})$.\\
And the differential  
\begin{equation*}
	\mathfrak{D}_\mathrm{R.B.AssDer}:\mathfrak{C}^n_\mathrm{R.B.AssDer}(A,M)\rightarrow \mathfrak{C}^{n+1}_\mathrm{R.B.AssDer}(A,M) \text{ is given by }
\end{equation*}
\begin{equation}
	\mathfrak{D}_\mathrm{R.B.AssDer}((f,g),h):=(d_\mathrm{Hoch}(f),d_\mathrm{Hoch}(g)+(-1)^n\partial f,-d_\mathrm{R.Hoch}(h)-\Phi^n(f)).
\end{equation}
Where $d_\mathrm{Hoch}:C^n(A,M)\rightarrow C^{n+1}(A,M)$ is the coboundary map associated to the Hochschild cohomology \cite{G2}.
\end{defn}
The cohomology of weighted Rota-Baxter AssDer pair $(\mathfrak{C}^\star_\mathrm{R.B.AssDer}(\mathfrak{A}_\mathrm{R},M_\mathrm{R}),\mathfrak{D}_\mathrm{R.B.AssDer})$ of $(\mathfrak{A},\delta,R)$ with coefficients in $(M,\delta_\mathrm{M},T)$ is denoted by $\mathcal{H}^\star_\mathrm{R.B.AssDer}(A)$
\subsection{Relation between cohomology of weighted Rota-Baxter AssDer and LieDer pairs}
In this section, we show that the cohomology of Rota-Baxter LieDer pair is related to the cohomology of Rota-Baxter AssDer pairs via a suitable skew-symmetrization.
\begin{prop}
	Let $(\mathfrak{A},\delta,R)$ be a weighted Rota-Baxter AssDer pair. Then $(\mathfrak{A}_\mathrm{c},\delta,R)$ is a weighted Rota-Baxter LieDer pair, where $\mathfrak{A}_\mathrm{c}=(A,[\cdot,\cdot]_\mathrm{c})$ such that 
	\begin{equation*}
		[x,y]_\mathrm{c}=\mu(x,y)-\mu(y,x),\quad \forall x,y \in A.
	\end{equation*}
	(is called the skew-symmetrization).
\end{prop}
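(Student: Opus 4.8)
The plan is to check, one ingredient at a time, that the triple $(\mathfrak{A}_\mathrm{c},\delta,R)$ satisfies the definition of a weighted Rota-Baxter LieDer pair: namely that $\mathfrak{A}_\mathrm{c}=(A,[\cdot,\cdot]_\mathrm{c})$ is a Lie algebra, that $\delta$ is a derivation for $[\cdot,\cdot]_\mathrm{c}$ (so $(\mathfrak{A}_\mathrm{c},\delta)$ is a LieDer pair), and that $R$ is a $\lambda$-weighted Rota-Baxter operator for $[\cdot,\cdot]_\mathrm{c}$ commuting with $\delta$.

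First I would invoke the classical commutator construction: for an associative product $\mu$, the bracket $[x,y]_\mathrm{c}=\mu(x,y)-\mu(y,x)$ is bilinear and antisymmetric, and the Jacobi identity follows from the standard six-term cancellation that uses only associativity of $\mu$; hence $\mathfrak{A}_\mathrm{c}$ is a Lie algebra. Next, since $(\mathfrak{A},\delta)$ is an AssDer pair we have $\delta(\mu(x,y))=\mu(\delta x,y)+\mu(x,\delta y)$, and subtracting the same identity with $x$ and $y$ interchanged gives $\delta([x,y]_\mathrm{c})=[\delta x,y]_\mathrm{c}+[x,\delta y]_\mathrm{c}$; this is exactly the skew-symmetrization of an AssDer pair into a LieDer pair already used in \cite{A5}, so $(\mathfrak{A}_\mathrm{c},\delta)$ is a LieDer pair.

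For the Rota-Baxter condition I would expand $[Rx,Ry]_\mathrm{c}=\mu(Rx,Ry)-\mu(Ry,Rx)$ and apply \eqref{RBO on Ass Algebras} to each of the two terms; regrouping the resulting six summands into three commutators yields $[Rx,Ry]_\mathrm{c}=R\big([Rx,y]_\mathrm{c}+[x,Ry]_\mathrm{c}+\lambda[x,y]_\mathrm{c}\big)$, which is precisely \eqref{RBO eq1} for the bracket $[\cdot,\cdot]_\mathrm{c}$, so $R$ is a $\lambda$-weighted Rota-Baxter operator on $\mathfrak{A}_\mathrm{c}$. Finally, the compatibility $R\circ\delta=\delta\circ R$ is part of the hypothesis on $(\mathfrak{A},\delta,R)$ and involves only the linear maps $R$ and $\delta$, not the product, so it transfers verbatim and \eqref{condition1 RBLieDer pair} holds; therefore $(\mathfrak{A}_\mathrm{c},\delta,R)$ is a weighted Rota-Baxter LieDer pair.

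I do not expect a genuine obstacle: every step is a short direct computation. The only point needing mild care is the bookkeeping in the Rota-Baxter verification, where the six terms coming from two applications of \eqref{RBO on Ass Algebras} must be paired correctly into the three commutators $[Rx,y]_\mathrm{c}$, $[x,Ry]_\mathrm{c}$, $\lambda[x,y]_\mathrm{c}$; the grouping is forced by the fact that $R$ is applied last to all of them, so there is no ambiguity.
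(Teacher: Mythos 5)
Your verification is correct and is exactly the standard argument: the commutator Jacobi identity from associativity, the derivation property by subtracting the Leibniz rule for $\mu$ with the arguments swapped, the Rota-Baxter identity for $[\cdot,\cdot]_\mathrm{c}$ by applying \eqref{RBO on Ass Algebras} to each of $\mu(Rx,Ry)$ and $\mu(Ry,Rx)$ and regrouping, and the commutation $R\circ\delta=\delta\circ R$ carried over unchanged. The paper states this proposition without any proof, so there is nothing to contrast with; your write-up supplies precisely the routine check the authors left implicit.
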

\begin{re}
	Moreover, if $(M,\delta_\mathrm{M},T)$ is a representation of $(\mathfrak{A},\delta,R)$ then $(M_\mathrm{c},\delta_\mathrm{M},T)$ is a representation of the weighted Rota-Baxter LieDer pair $(\mathfrak{A}_\mathrm{c},\delta,R)$ with the representation is given as follow 
	\begin{equation*}
		\rho(x)(m)=l(x,m)-r(m,x),\quad \forall x\in A, m\in M.
	\end{equation*}
\end{re}
It is known that the standard skew-symmetrization gives rise to a morphism from the Hochschild cochain
complex of an associative algebra to the Chevalley-Eilenberg cochain complex of the corresponding skewsymmetrized Lie algebra. Means that there is a morphism $S_n$ from the $n$-th cochain group (in context of Hochschild cohomology) $C^n(\mathfrak{A},M)$ to the $n$-th cochain group $C^n(\mathfrak{A}_\mathrm{c},M_\mathrm{c})$ (in the context of Chevalley-Eilenberg cohomology), more general $S_\star$ are called the skew-symmetrization maps.
\begin{thm}
	Let $(\mathfrak{A},\delta,R)$ be a weighted Rota-Baxter AssDer pair and $(M,\delta_\mathrm{M},T)$ be a representation of it. Then the collection of maps 
	\begin{equation*}
		\mathcal{S}_\mathrm{n}:\mathfrak{C}^n_\mathrm{R.B.AssDer}(\mathfrak{A},M)\rightarrow \mathfrak{C}^n_\mathrm{R.B.LieDer}(\mathfrak{A}_\mathrm{c},M_\mathrm{c}),\quad \mathcal{S}_\mathrm{n}=(S_n,S_{n-1},S_{n-1})
	\end{equation*}
	induces a morphism from the cohomology of $(\mathfrak{A},\delta,R)$ with coefficients in $(M,\delta_\mathrm{M},T)$ to the cohomology of $(\mathfrak{A}_\mathrm{c},\delta,R)$ with coefficients in $(M_\mathrm{c},\delta_\mathrm{M},T)$.
\end{thm}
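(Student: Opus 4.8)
The plan is to prove that the collection $\mathcal{S}_\star$ is a morphism of cochain complexes, that is, $\mathcal{S}_{n+1}\circ\mathfrak{D}_\mathrm{R.B.AssDer}=\mathfrak{D}_\mathrm{R.B.LieDer}\circ\mathcal{S}_n$ for all $n\geq1$; the induced map on cohomology is then automatic. Both differentials act ``triangularly'' on the three slots of $\mathfrak{C}^n=(C^n\oplus C^{n-1})\oplus C^{n-1}(\mathfrak{A}_\mathrm{R},M_\mathrm{R})$ (resp. with $\mathfrak{A}_\mathrm{c},M_\mathrm{c}$), being assembled from four ingredients: the Hochschild coboundary $d_\mathrm{Hoch}$ (resp. the Chevalley--Eilenberg coboundary $\mathrm{d}$), the derivation operator $\partial$, the chain map $\Phi^\star$, and the Rota-Baxter coboundary $d_\mathrm{R.Hoch}$ (resp. $\mathrm{d}_\mathrm{R}$). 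Since $\mathcal{S}_\star=(S_\star,S_{\star-1},S_{\star-1})$ applies the skew-symmetrization operator in each slot, it suffices to verify the four commuting squares
\begin{align*}
	S_{n+1}\circ d_\mathrm{Hoch}&=\mathrm{d}\circ S_n, & S_n\circ\partial&=\partial\circ S_n,\\
	S_{n+1}\circ d_\mathrm{R.Hoch}&=\mathrm{d}_\mathrm{R}\circ S_n, & S_n\circ\Phi^n&=\Phi^n\circ S_n,
\end{align*}
together with the trivial remark that the sign $(-1)^n$ appearing in the middle slot is the same on both sides. Given these, $\mathcal{S}_{n+1}\circ\mathfrak{D}_\mathrm{R.B.AssDer}=\mathfrak{D}_\mathrm{R.B.LieDer}\circ\mathcal{S}_n$ is read off slot by slot.

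The first square is exactly the classical statement, recalled just before the theorem, that skew-symmetrization is a morphism from the Hochschild complex of an associative algebra with values in a bimodule to the Chevalley--Eilenberg complex of its skew-symmetrized Lie algebra; I would simply invoke it. The second square is a short bookkeeping check: with $\partial f=\sum_i f\circ(\mathbf{1}\otimes\cdots\otimes\delta\otimes\cdots\otimes\mathbf{1})-\delta_\mathrm{M}\circ f$ and $S_n$ the alternating sum, with scalar signs, over permutations of the arguments, reindexing the permutation sum shows that both pre-composition with the symmetric ``insert $\delta$'' operator and post-composition with $\delta_\mathrm{M}$ commute with $S_n$. The fourth square is of the same nature, since $\Phi^n$ is defined by the \emph{same} formula \eqref{cohomology of RBO} on the associative and Lie sides, involving only $R$ acting on the arguments and $T$ acting on the output and no reference to the product or bracket; its commutation with the argument-alternating operator $S_n$ is again a reindexing check.

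The real content is the third square, and it follows from two identifications. First, skew-symmetrizing the Rota-Baxter deformed product recovers the Rota-Baxter deformed bracket: expanding
\begin{equation*}
	\mu_\mathrm{R}(x,y)-\mu_\mathrm{R}(y,x)=[Rx,y]_\mathrm{c}+[x,Ry]_\mathrm{c}+\lambda[x,y]_\mathrm{c},
\end{equation*}
so that $(\mathfrak{A}_\mathrm{R})_\mathrm{c}=(\mathfrak{A}_\mathrm{c})_\mathrm{R}$ with the bracket \eqref{induced RBLieDer}. Second, the skew-symmetrized Rota-Baxter action equals $\widetilde{\rho}$: for $x\in A$ and $m\in M$,
\begin{equation*}
	l_\mathrm{R}(x,m)-r_\mathrm{R}(m,x)=\bigl(l(Rx,m)-r(m,Rx)\bigr)-T\bigl(l(x,m)-r(m,x)\bigr)=\rho(Rx)(m)-T(\rho(x)(m))=\widetilde{\rho}(x)(m),
\end{equation*}
with $\widetilde{\rho}$ as in \eqref{rep of new Rota-Baxter LieDer pairs}, so that $(M_\mathrm{R})_\mathrm{c}=(M_\mathrm{c})_\mathrm{R}$ as a representation. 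Granting these, $d_\mathrm{R.Hoch}$ is literally the Hochschild coboundary of the associative algebra $\mathfrak{A}_\mathrm{R}$ with values in the bimodule $M_\mathrm{R}$, while $\mathrm{d}_\mathrm{R}$ is literally the Chevalley--Eilenberg coboundary of $(\mathfrak{A}_\mathrm{R})_\mathrm{c}$ with values in $(M_\mathrm{R})_\mathrm{c}$, so the third square is the first square applied verbatim to $\mathfrak{A}_\mathrm{R}$ and $M_\mathrm{R}$.

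With the four squares in hand, $\mathcal{S}_\star$ is a cochain map, hence carries cocycles to cocycles and coboundaries to coboundaries, and therefore induces a morphism $\mathcal{H}^\star_\mathrm{R.B.AssDer}(\mathfrak{A},M)\to\mathcal{H}^\star_\mathrm{R.B.LieDer}(\mathfrak{A}_\mathrm{c},M_\mathrm{c})$. I expect the main obstacle to be purely organizational: pinning down the identifications $(\mathfrak{A}_\mathrm{R})_\mathrm{c}=(\mathfrak{A}_\mathrm{c})_\mathrm{R}$ and $(M_\mathrm{R})_\mathrm{c}=(M_\mathrm{c})_\mathrm{R}$ at the level of all structure maps (bracket, action, derivation, Rota-Baxter operator) so that the classical skew-symmetrization chain map applies without change; the individual commuting squares are then either classical or routine reindexing.
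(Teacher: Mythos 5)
Your proposal is correct and takes essentially the same route as the paper, whose entire proof is the slot-by-slot verification that $\mathfrak{D}_\mathrm{R.B.LieDer}\circ\mathcal{S}_\mathrm{n}=\mathcal{S}_{n+1}\circ\mathfrak{D}_\mathrm{R.B.AssDer}$ via the four commuting squares, which it asserts without further justification. Your added justifications (in particular the identifications $(\mathfrak{A}_\mathrm{R})_\mathrm{c}=(\mathfrak{A}_\mathrm{c})_\mathrm{R}$ and $l_\mathrm{R}(x,m)-r_\mathrm{R}(m,x)=\widetilde{\rho}(x)(m)$) are sound, and your indexing $S_n\circ\partial=\partial\circ S_n$ and $S_n\circ\Phi^n=\Phi^n\circ S_n$ is the dimensionally correct version of what the paper writes with $S_{n+1}$.
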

\begin{proof}
	For $((f,g),h)\in \mathfrak{C}^n_\mathrm{R.B.AssDer}(\mathfrak{A},M)$ we have 
	\begin{align*}
		\mathfrak{D}_\mathrm{R.B.LieDer}\circ \mathcal{S}_\mathrm{n} ((f,g),h)&=\mathfrak{D}_\mathrm{R.B.LieDer}(S_n(f),S_{n-1}(g),S_{n-1}(h))\\
		&=(d\circ S_n(f),d\circ S_{n-1}(g)+(-1)^n\partial \circ S_n(f),-d_\mathrm{R}\circ S_{n-1}(h)-\Phi\circ S_n(f))\\
		&=(S_{n+1}\circ d_\mathrm{Hoch}(f),S_n\circ d_\mathrm{Hoch}(g)+(-1)^nS_{n+1}\circ \partial (f),-S_n\circ d_\mathrm{R.Hoch}(h)-S_{n+1}\circ \Phi^n(f))\\
		&=\mathcal{S}_{n+1}\circ \mathfrak{D}_\mathrm{R.B.AssDer}((f,g),h)
	\end{align*}
\end{proof}

\section{Aplication of cohomology: Formal deformations}\label{section 4}
In this section, we will study formal deformation of weighted Rota-Baxter LieDer and AssDer pairs.
\subsection{Formal deformation of weighted Rota-Baxter LieDer pairs}
In this subsection we will deform the Lie bracket on $L$, the Rota-Baxter operator $R$ and the derivation $\delta$. We investigate the relation between such deformation and cohomology of $(L,\delta,R)$ with coefficients in the adjoint representation.\\
Let $(\mathfrak{L},\delta,R)$ be a Rota-Baxter LieDer pair, let $\gamma\in C^2(L,L)=\mathrm{Hom}(\wedge^2L,L)$ be the element that correspponds to the Lie bracket on $L$, i.e., $\gamma(x,y)=[x,y]$ for $x,y\in L$. Consider the space $L[[t]]$ of the formal power series in $t$ with coefficients from $L$. Then $L[[t]]$ is a $\mathbb{K}[[t]]$-module.
\begin{defn}
	A formal $1$-parameter deformation of $(\mathfrak{L},\delta,R)$ consists of a triple $(\gamma_\mathrm{t},\delta_\mathrm{t},R_\mathrm{t})$ of three formal power series 
	\begin{eqnarray*}
		\gamma_\mathrm{t}&=&\displaystyle\sum_{i\geq0}\gamma_\mathrm{i}t^\mathrm{i},\quad \text{where } \gamma_\mathrm{i}\in \mathrm{Hom}(\wedge^2L,L) \text{ with } \gamma_0=\gamma,\\
		\delta_\mathrm{t}&=&\displaystyle\sum_{i\geq0}\delta_\mathrm{i}t^\mathrm{i},\quad \text{where } \delta_\mathrm{i}\in \mathrm{Hom}(L,L) \text{ with } \delta_0=\delta,\\
		R_\mathrm{t}&=&\displaystyle\sum_{i\geq0}R_\mathrm{i}t^\mathrm{i},\quad \text{where } R_\mathrm{i}\in \mathrm{Hom}(L,L) \text{ with } R_0=R.
	\end{eqnarray*}
Such that the $\mathbb{K}[[t]]$-bilinear map $\gamma_\mathrm{t}$ defines a Lie algebra structure on $L[[t]]$ and the two $\mathbb{K}[[t]]$-linear maps  $\delta_\mathrm{t},R_\mathrm{t}:L[[t]] \rightarrow L[[t]]$ are respectively a derivation and a Rota-Baxter opperator. This means that $(L[[t]]=(L[[t]],\gamma_\mathrm{t}),\delta_\mathrm{t,R_\mathrm{t}})$ is a Rota-Baxter LieDer pair over $\mathbb{K}[[t]]$. 
\end{defn}
Thus, $(\gamma_\mathrm{t},\delta_{\mathrm{t}},R_\mathrm{t})$ is a formal $1$-parameter deformation of $(\mathfrak{L},\delta,R)$ if and only if, for $x,y,z\in L$
\begin{eqnarray*}
	\circlearrowleft_{x,y,z}\gamma_\mathrm{t}(x,\gamma_\mathrm{t}(y,z))&=&0,\\
	\gamma_\mathrm{t}(R_\mathrm{t}x,R_\mathrm{t}y)&=&R_\mathrm{t}(\gamma_\mathrm{t}(R_\mathrm{t}x,y)+\gamma_\mathrm{t}(x,R_{\mathrm{t}}y)+\lambda\gamma_\mathrm{t}(x,y) ),\\
	\delta_{\mathrm{t}}(\gamma_\mathrm{t}(x,y))&=&\gamma_\mathrm{t}(\delta_\mathrm{t}x,y)+\gamma_\mathrm{t}(x,\delta_{\mathrm{t}}y). 
\end{eqnarray*}
They are equivalent to the following systems of identities, for $n\geq0$ and for all $x,y,z\in L$,
\begin{eqnarray}
	\displaystyle\sum_{i+j=n}\gamma_\mathrm{i}(x,\gamma_\mathrm{j}(y,z))+\gamma_\mathrm{i}(y,\gamma_\mathrm{j}(z,x))+\gamma_\mathrm{i}(z,\gamma_\mathrm{j}(x,y))&=&0,\label{eqt deformation 1}\\
	\displaystyle\sum_{i+j=n}\Big(\delta_{\mathrm{i}}(\gamma_\mathrm{j}(x,y))-\gamma_\mathrm{j}(\delta_{\mathrm{i}}x,y)-\gamma_\mathrm{j}(x,\delta_{\mathrm{i}}y) \Big)&=&0,\label{eqt deformation 2}\\
	\displaystyle\sum_{i+j+k=n}\Big(\gamma_\mathrm{i}(R_\mathrm{j}x,R_\mathrm{k}y) -R_\mathrm{i}(\gamma_\mathrm{j}(R_\mathrm{k}x,y)+\gamma_\mathrm{j}(x,R_\mathrm{k}y)+\lambda\gamma_\mathrm{j}(x,y))\Big)&=&0.\label{eqt deformation 3}
\end{eqnarray}
\begin{defn}
	Two formal deformations $(\gamma_\mathrm{t},\delta_{\mathrm{t}},R_\mathrm{t})$ and $(\gamma^{\prime}_\mathrm{t},\delta^\prime_{\mathrm{t}},R^\prime_\mathrm{t})$ of a Rota-Baxter LieDer pair $(L,\delta,R)$ are said to be  equivalent if there is a formal isomorphism 
	\begin{equation*}
		\varphi_\mathrm{t}=\displaystyle\sum_{i\geq0}\varphi_\mathrm{i}t^\mathrm{i}:L[[t]]\rightarrow L[[t]],\quad \text{where } \varphi_\mathrm{i}\in \mathrm{Hom}(L,L) \text{ and } \varphi_0=\mathrm{id}_L.
	\end{equation*}
Such that the $\mathbb{K}[[t]]$-linear map $\varphi_\mathrm{t}$ is a morphism of Rota-Baxter LieDer pairs from $(L[[t]]^\prime,\delta^\prime_{\mathrm{t}},R^\prime_\mathrm{t})$ to $(L[[t]],\delta_{\mathrm{t}},R_\mathrm{t})$
\end{defn}
It means that the following identities holds 
\begin{equation*}
	\varphi_\mathrm{t}(\gamma^{\prime}_\mathrm{t}(x,y))=\gamma_\mathrm{t}(\varphi_\mathrm{t}(x),\varphi_\mathrm{t}(y)) \text{ and } \varphi_\mathrm{t}\circ \delta^\prime_{\mathrm{t}}=\delta_{\mathrm{t}} \circ \varphi_\mathrm{t} \text{ and } \varphi_\mathrm{t}\circ R^\prime_\mathrm{t}=R_\mathrm{t}\circ \varphi_\mathrm{t}.
\end{equation*}
They can be expressed as follow, for $n\geq0$
\begin{eqnarray}
	\displaystyle\sum_{i+j=n}\varphi_\mathrm{i}(\gamma^\prime_\mathrm{j}(x,y))&=&\displaystyle\sum_{i+j+k=n}\gamma_\mathrm{i}(\varphi_\mathrm{j}(x),\varphi_\mathrm{k}(y)),\label{eqt deformation 4}\\
	\displaystyle\sum_{i+j=n}\varphi_\mathrm{i}\circ \delta^\prime_{\mathrm{j}}&=&\displaystyle\sum_{i+j=n}\delta_\mathrm{i}\circ \varphi_\mathrm{j},\label{eqt deformation 5}\\
		\displaystyle\sum_{i+j=n}\varphi_\mathrm{i}\circ R^\prime_{\mathrm{j}}&=&\displaystyle\sum_{i+j=n}R_\mathrm{i}\circ \varphi_\mathrm{j}.\label{eqt deformation 6}
\end{eqnarray}
\begin{thm}\label{Theorem deformation}
	Let $(\gamma_\mathrm{t},\delta_{\mathrm{t}},R_\mathrm{t})$ be a formal $1$-parameter deformation of the Rota-Baxter LieDer pair $(L,\delta,R)$. Then $(\gamma_1,\delta_1,R_1)\in \mathfrak{C}^2_\mathrm{RBLieDer}(L,L)$ is a $2$-cocycle in the cohmology of $(L,\delta,R)$ with coefficients in the adjoint representation.\\
\end{thm}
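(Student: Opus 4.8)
The plan is to extract the order-one terms from the three families of deformation identities \eqref{eqt deformation 1}, \eqref{eqt deformation 2}, \eqref{eqt deformation 3} and to recognize each of them as a component of the cocycle condition $\mathfrak{D}_{\mathrm{RBLieDer}}(\gamma_1,\delta_1,R_1)=0$. Recall that for $n=2$ the differential is
\begin{equation*}
	\mathfrak{D}_{\mathrm{RBLieDer}}((f,g),h)=(\mathrm{d}(f),\mathrm{d}(g)+\partial f,-\mathrm{d}_{\mathrm{R}}(h)-\Phi^2 f),
\end{equation*}
so I must show that each of the three slots vanishes when $(f,g,h)=(\gamma_1,\delta_1,R_1)$, using the adjoint representation $(\mathcal{V},\delta_{\mathrm V},T)=(L,\delta,R)$ with $\rho=\mathrm{ad}$, $\widetilde\rho$ as in \eqref{rep of new Rota-Baxter LieDer pairs}, and $[\cdot,\cdot]_{\mathrm R}$ as in \eqref{induced RBLieDer}.

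First I would take $n=1$ in \eqref{eqt deformation 1}. The $i+j=1$ terms split as $i=0,j=1$ and $i=1,j=0$, and since $\gamma_0=\gamma$ is the given Lie bracket, collecting the cyclic sum gives exactly $(\mathrm{d}\,\gamma_1)(x,y,z)=0$; this is the standard Chevalley--Eilenberg computation, so the first slot is the $2$-cocycle condition for the Chevalley--Eilenberg complex with coefficients in the adjoint representation. Next I would take $n=1$ in \eqref{eqt deformation 2}: the two terms $i=0$ and $i=1$ unpack, using $\delta_0=\delta$, into
\begin{equation*}
	\delta(\gamma_1(x,y))-\gamma_1(\delta x,y)-\gamma_1(x,\delta y)+\delta_1([x,y])-[\delta_1 x,y]-[x,\delta_1 y]=0,
\end{equation*}
and I would match the first three terms with $\partial\gamma_1$ (the operator $\partial f_2=f_2\circ(\delta\otimes 1)+f_2\circ(1\otimes\delta)-\delta_{\mathrm V}\circ f_2$ applied to $\gamma_1$, up to the sign $(-1)^2=+1$) and the last three with $(\mathrm{d}\,\delta_1)(x,y)$ for the Chevalley--Eilenberg differential on $1$-cochains with adjoint coefficients; hence the second slot $\mathrm{d}(\delta_1)+\partial\gamma_1$ vanishes. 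Finally I would take $n=1$ in \eqref{eqt deformation 3}, whose $i+j+k=1$ part has three summands ($R_1$ in the outer slot, or $R_1$ in one of the two inner slots), and I would reorganize it into $-(\mathrm{d}_{\mathrm R}R_1)(x,y)-\Phi^2(\gamma_1)(x,y)=0$: the terms where $\gamma_1$ is evaluated with $R$ inserted, minus $R_1$ composed with the weight-$\lambda$ combination, assemble into $\Phi^2(\gamma_1)$ as defined in \eqref{cohomology of RBO}, while the terms linear in $R_1$ assemble into the coboundary $\mathrm{d}_{\mathrm R}R_1$ of the $1$-cochain $R_1$ against the representation $\widetilde\rho$ of $L_{\mathrm R}$.

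The bookkeeping in the last step is where the real work lies: one has to be careful that the definition of $\Phi$ in \eqref{cohomology of RBO} (with its inner sum $\sum_{i_1<\dots<i_k}$ and the power $\lambda^{n-k-1}$) specializes correctly at $n=2$ to $\gamma_1(Rx,Ry)-\lambda\,T\gamma_1(x,y)-T\gamma_1(Rx,y)-T\gamma_1(x,Ry)$ after using $T=R$ and matching signs, and that the remaining $R_1$-linear terms genuinely coincide with $\mathrm{d}_{\mathrm R}R_1$ for the bracket $[\cdot,\cdot]_{\mathrm R}$ and the twisted action $\widetilde\rho$. I expect the main obstacle to be precisely this sign-and-index reconciliation in the third component; the first two components are essentially the known LieDer-pair computation from \cite{R0} transcribed to the adjoint representation. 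Once the three slots are shown to vanish, $(\gamma_1,\delta_1,R_1)\in\mathcal Z^2_{\mathrm{RBLieDer}}(L,L)$ and the theorem follows.
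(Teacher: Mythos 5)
Your proposal is correct and follows essentially the same route as the paper: extract the coefficient of $t^1$ from each of the three deformation identities and identify the three resulting relations with the three components $\mathrm{d}(\gamma_1)=0$, $\mathrm{d}(\delta_1)+\partial\gamma_1=0$, and $\mathrm{d}_{\mathrm R}(R_1)+\Phi^2(\gamma_1)=0$ of $\mathfrak{D}_{\mathrm{RBLieDer}}((\gamma_1,\delta_1),R_1)=0$. If anything you are more careful than the printed proof, which drops the $\lambda$-terms (the $\lambda R_1([x,y])$ and $\lambda R(\gamma_1(x,y))$ contributions) from the displayed order-one Rota--Baxter identity and misprints the third slot of the final display as $-\partial(R_1)-\Phi(R_1)$ instead of $-\mathrm{d}_{\mathrm R}(R_1)-\Phi^2(\gamma_1)$; your sign-and-index reconciliation for $\Phi^2$ is exactly the point that needs, and gets, the attention.
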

\begin{proof}
	Since $(\gamma_\mathrm{t},\delta_{\mathrm{t}},R_\mathrm{t})$ is a formal $1$-parameter deformation, we have from \eqref{eqt deformation 1}, \eqref{eqt deformation 2} and \eqref{eqt deformation 3}, for $n\geq0$, that 
	\begin{equation*}
		\circlearrowleft_{x,y,z}[x,\gamma_1(y,z)]+\circlearrowleft_{x,y,z}\gamma_1(x,[y,z])=0,
	\end{equation*}
and 
\begin{equation*}
	\gamma_1(Rx,Ry)+[R_1x,Ry]+[Rx,R_1y]=R_1([Rx,y]+[x,Ry])+R([R_1x,y]+[x,R_1y]+\gamma_1(Rx,y)+\gamma_1(x,Ry)),
\end{equation*}
and
\begin{equation*}
	\delta_1([x,y])+\delta(\gamma_1(x,y))-\gamma_1(\delta x,y)-[\delta_1 x,y]-\gamma_1[x,\delta y]-[x,\delta_1y]=0.
\end{equation*}
The first identity means that $\mathrm{d}\gamma_1=0$ and the second identity means that $\mathrm{d}_\mathrm{R}(R_1)+\Phi^2(\gamma_1)=0$ and the last one means that $\mathrm{d}(\delta_1)+\partial^2(\gamma_1)=0$
This implies that 
\begin{align*}
	\mathfrak{D}_\mathrm{RBLieDer}((\gamma_1,\delta_1),R_1)&=(\mathrm{d}(\gamma_1),\mathrm{d}(\delta_1)+\partial(\gamma_1),-\partial(R_1)-\Phi(R_1))\\
	&=(0,0,-\partial(R_1)-\Phi(R_1))\\
	&=(0,0,0).
\end{align*}
This complete the proof.
\end{proof}
\begin{thm}
	Let $(L,\delta,R)$ be a weighted Rota-Baxter LieDer pair. If $\mathcal{H}_\mathrm{R.B.LieDer}(L,L)=0$ then any formal $1$-parameter deformation of $(\mathfrak{L},\delta,R)$ is equivalent to the trivial one $(\gamma^\prime_{\mathrm{t}}=\gamma,R^\prime_{\mathrm{t}}=R,\delta^\prime_{\mathrm{t}}=\delta)$.
\end{thm}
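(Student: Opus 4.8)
The strategy is the standard rigidity argument: show that every formal deformation can be inductively "straightened out" by a formal isomorphism, using the vanishing of $\mathcal{H}^2_{\mathrm{R.B.LieDer}}(L,L)$ at each step to kill the lowest-order obstruction. We proceed by induction on the order $n\geq 1$ up to which a given deformation $(\gamma_\mathrm{t},\delta_\mathrm{t},R_\mathrm{t})$ already agrees with the trivial one, i.e. $\gamma_i=0$, $\delta_i=0$, $R_i=0$ for $1\leq i\leq n-1$.

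First I would record the base consequence of Theorem \ref{Theorem deformation}: reading the deformation equations \eqref{eqt deformation 1}, \eqref{eqt deformation 2}, \eqref{eqt deformation 3} in degree $n$ under the inductive hypothesis, all the cross terms involving $\gamma_i,\delta_i,R_i$ with $0<i<n$ vanish, so exactly as in the proof of Theorem \ref{Theorem deformation} one gets $\mathrm{d}\gamma_n=0$, $\mathrm{d}(\delta_n)+(-1)^n\partial(\gamma_n)=0$ and $-\mathrm{d}_\mathrm{R}(R_n)-\Phi^n(\gamma_n)=0$; equivalently $\mathfrak{D}_{\mathrm{RBLieDer}}((\gamma_n,\delta_n),R_n)=0$, so $((\gamma_n,\delta_n),R_n)$ is a $2$-cocycle. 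Since $\mathcal{H}^2_{\mathrm{R.B.LieDer}}(L,L)=0$, this cocycle is a coboundary: there is $\varphi_n\in\mathrm{Hom}(L,L)=\mathfrak{C}^1_{\mathrm{RBLieDer}}(L,L)$ with $\mathfrak{D}_{\mathrm{RBLieDer}}(\varphi_n)=((\gamma_n,\delta_n),R_n)$, which by the definition of $\mathfrak{D}_{\mathrm{RBLieDer}}$ on $1$-cochains unpacks to $\mathrm{d}\varphi_n=\gamma_n$, $-\partial\varphi_n=\delta_n$ and $-\Phi(\varphi_n)=R_n$.

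Next I would define the formal isomorphism $\varphi_\mathrm{t}=\mathrm{id}_L - \varphi_n t^n$ (more precisely $\varphi_\mathrm{t}=\sum_{i\geq 0}\varphi_i t^i$ with $\varphi_0=\mathrm{id}_L$, $\varphi_i=0$ for $0<i<n$, and this chosen $\varphi_n$), and let $(\gamma'_\mathrm{t},\delta'_\mathrm{t},R'_\mathrm{t})$ be the deformation obtained by transporting $(\gamma_\mathrm{t},\delta_\mathrm{t},R_\mathrm{t})$ through $\varphi_\mathrm{t}$, i.e. the unique deformation making $\varphi_\mathrm{t}$ a morphism as in \eqref{eqt deformation 4}, \eqref{eqt deformation 5}, \eqref{eqt deformation 6}. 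Comparing coefficients of $t^n$ in those three equations, and using $\varphi_i=0$ for $0<i<n$ together with $\gamma_i=\delta_i=R_i=0$ for $0<i<n$, I get $\gamma'_n=\gamma_n-\mathrm{d}\varphi_n$, $\delta'_n=\delta_n+\partial\varphi_n$ and $R'_n=R_n+\Phi(\varphi_n)$ (the signs being exactly those dictated by $\mathfrak{D}_{\mathrm{RBLieDer}}$ on $1$-cochains); a short check using the Hochschild/Chevalley-Eilenberg bracket conventions fixes them. By the previous paragraph these all vanish, so $(\gamma'_\mathrm{t},\delta'_\mathrm{t},R'_\mathrm{t})$ agrees with the trivial deformation up to and including order $n$. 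Iterating over $n=1,2,\dots$ and composing the resulting isomorphisms (the composite converges $t$-adically since the $n$-th factor differs from $\mathrm{id}_L$ only in degrees $\geq n$) produces a formal isomorphism carrying the original deformation to the trivial one, which is the claim.

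The main obstacle is bookkeeping, not conceptual: one must verify that the order-$n$ part of the "transport of structure" equations \eqref{eqt deformation 4}--\eqref{eqt deformation 6} really collapses to $\gamma'_n=\gamma_n-\mathrm{d}\varphi_n$, $\delta'_n=\delta_n+\partial\varphi_n$, $R'_n=R_n+\Phi(\varphi_n)$ with the correct signs, i.e. that the infinitesimal of conjugating a bracket (resp. a derivation, resp. a Rota-Baxter operator) by $\mathrm{id}-\varphi_n t^n$ is precisely the Chevalley-Eilenberg differential $\mathrm{d}$ (resp. $-\partial$, resp. $-\Phi$) applied to $\varphi_n$. This is the point where the specific definitions of $\mathrm{d}$, $\partial$ and $\Phi$ given earlier in the paper must be matched against the deformation equations; once that identification is in place the induction runs automatically.
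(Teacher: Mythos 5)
Your proof is correct and follows essentially the same route as the paper: identify the lowest-order term as a $2$-cocycle, use $\mathcal{H}^2_{\mathrm{R.B.LieDer}}(L,L)=0$ to write it as $\mathfrak{D}_{\mathrm{RBLieDer}}(\varphi_n)$, conjugate by $\mathrm{id}\pm\varphi_n t^n$ to push the deformation one order higher, and iterate. The only difference is that you make explicit the induction and the $t$-adic convergence of the composite isomorphism, which the paper compresses into ``by repeating the same argument.''
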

\begin{proof}
Let $(\gamma_\mathrm{t},\delta_\mathrm{t},R_\mathrm{t})$ be a formal $1$-parameter deformation of the weighted Rota-Baxter LieDer pair $(\mathfrak{L},\delta,R)$. From theorem \ref{Theorem deformation} we have that $(\gamma1,\delta1,R_1)$ is a $2$-cocycle. Then from the hypothesis there exists $f\in \mathrm{Hom}(L,L)$ such that 
\begin{equation}\label{equation deformation}
	\mathfrak{D}_\mathrm{R.B.LieDer}(f)=(d(f),-\partial(f),-\Phi(f)).
\end{equation}
Means that $(\gamma1,\delta1,R_1)=(d(f),-\partial(f),-\Phi(f))$.\\
Let $\phi_\mathrm{t}:L\rightarrow L$ be the map $\phi_\mathrm{t}=id+\phi_1 t$. Then $(\overline{\gamma_\mathrm{t}}=\phi_\mathrm{t}^{-1}\circ \gamma_\mathrm{t}\circ(\phi_\mathrm{t}\otimes\phi_\mathrm{t}),\overline{\delta_\mathrm{t}}=\phi_\mathrm{t}^{-1}\circ\delta_\mathrm{t}\circ\phi_\mathrm{t},\overline{R_\mathrm{t}}=\phi_\mathrm{t}^{-1}\circ R_\mathrm{t}\circ\phi_\mathrm{t} )$ is a formal $1$-deformation of $(\mathfrak{L},\delta,R)$ equivalent to $(\gamma_\mathrm{t},\delta_\mathrm{t},R_\mathrm{t})$. By \eqref{equation deformation} we can easily check that $(\overline{\gamma_1}=\overline{\delta_1}=\overline{R_1}=0)$, means that 
\begin{eqnarray*}
	\overline{\gamma_\mathrm{t}}&=&\gamma+\gamma_2t^2+...\\
	\overline{\delta_\mathrm{t}}&=&\delta+\delta_2 t^2+...\\
	\overline{R_\mathrm{t}}&=&R+R_2t^2+...
\end{eqnarray*}
and by repeating the same argument we conculde that $(\gamma_\mathrm{t},\delta_\mathrm{t},R_\mathrm{t})$ is equivalent to $(\gamma^\prime_{\mathrm{t}}=\gamma,R^\prime_{\mathrm{t}}=R,\delta^\prime_{\mathrm{t}}=\delta)$.
\end{proof}
\subsection{Formal deformation of weighted Rota-Baxter AssDer pairs}
In this section we study formal deformation of weighted Rota-Baxter AssDer pairs.\\
Let $(\mathfrak{A},\delta,R)$ be a Rota-Baxter AssDer pair. A formal $1$-parameter deformation of $(\mathfrak{A},\delta,R)$ consists of three formal power series 
	\begin{eqnarray*}
	\mu_\mathrm{t}&=&\displaystyle\sum_{i\geq0}\mu_\mathrm{i}t^\mathrm{i},\quad \text{where } \mu_\mathrm{i}\in \mathrm{Hom}(\wedge^2L,L) \text{ with } \mu_0=\mu,\\
	\delta_\mathrm{t}&=&\displaystyle\sum_{i\geq0}\delta_\mathrm{i}t^\mathrm{i},\quad \text{where } \delta_\mathrm{i}\in \mathrm{Hom}(L,L) \text{ with } \delta_0=\delta,\\
	R_\mathrm{t}&=&\displaystyle\sum_{i\geq0}R_\mathrm{i}t^\mathrm{i},\quad \text{where } R_\mathrm{i}\in \mathrm{Hom}(L,L) \text{ with } R_0=R.
\end{eqnarray*}
Then we say that $(\mu_\mathrm{t},\delta_\mathrm{t},R_\mathrm{t})$ is a formal $1$-parameter deformation of $(\mathfrak{A},\delta,R)$ if and only if 
\begin{eqnarray*}
    \mu_\mathrm{t}(\mu_\mathrm{t}(x,y),z)&=&\mu_\mathrm{t}(x,\mu_\mathrm{t}(y,z)),\\
	\mu_\mathrm{t}(R_\mathrm{t}x,R_\mathrm{t}y)&=&R_\mathrm{t}(\mu_\mathrm{t}(R_\mathrm{t}x,y)+\mu_\mathrm{t}(x,R_{\mathrm{t}}y)+\lambda\mu_\mathrm{t}(x,y) ),\\
	\delta_{\mathrm{t}}(\mu_\mathrm{t}(x,y))&=&\mu_\mathrm{t}(\delta_\mathrm{t}x,y)+\mu_\mathrm{t}(x,\delta_{\mathrm{t}}y). 
\end{eqnarray*}
And they are equivalent to the followings
\begin{eqnarray}
	\displaystyle\sum_{i+j=n}\mu_\mathrm{i}(\mu_\mathrm{j}(x,y),z)-\mu_\mathrm{i}(x,\mu_\mathrm{j}(y,z))&=&0,\label{eqt deformation Ass1}\\
	\displaystyle\sum_{i+j=n}\Big(\delta_{\mathrm{i}}(\mu_\mathrm{j}(x,y))-\mu_\mathrm{j}(\delta_{\mathrm{i}}x,y)-\mu_\mathrm{j}(x,\delta_{\mathrm{i}}y) \Big)&=&0,\label{eqt deformation Ass2}\\
	\displaystyle\sum_{i+j+k=n}\Big(\mu_\mathrm{i}(R_\mathrm{j}x,R_\mathrm{k}y) -R_\mathrm{i}(\mu_\mathrm{j}(R_\mathrm{k}x,y)+\mu_\mathrm{j}(x,R_\mathrm{k}y)+\lambda\mu_\mathrm{j}(x,y))\Big)&=&0.\label{eqt deformation Ass3}
\end{eqnarray}
For $n=1$ we obtain 
\begin{equation*}
	\mu_1(\mu(x,y),z)+\mu(\mu_1(x,y),z)=\mu(x,\mu_1(y,z))+\mu_1(x,\mu(y,z))
\end{equation*}
which means that $d_\mathrm{Hoch}(\mu_1)=0$.
And the second equation 
\begin{equation*}
	\delta(\mu_1(x,y))+\delta_1(\mu(x,y))=\mu(\delta_1x,y)+\mu_1(\delta x,y)+\mu(x,\delta_1y)+\mu_1(x,\delta y)
\end{equation*}
means that $d_\mathrm{Hoch}(\delta_1)+\partial \mu_1=0$.  And the third equation
\begin{eqnarray*}
	&&\mu_1(Rx,Ry)-R_1(\mu(Rx,y)+\mu(x,Ry)+\lambda \mu(x,y))\\
	&&+\mu(R_1x,Ry)-R(\mu_1(Rx,y)+\mu_1(x,Ry)+\lambda \mu_1(x,y))\\
	&&+\mu(Rx,Ry)-R(\mu(R_1x,y)+\mu(x,R_1y)+\lambda \mu(x,y))=0
\end{eqnarray*}
which means that  $-d_\mathrm{R.Hoch}-\Phi^2(\mu_1)=0$. which leads us to the following
\begin{prop}
	Let $(\mu_\mathrm{t},\delta_\mathrm{t},R_\mathrm{t})$ be a formal deformation of a weighted Rota-Baxter AssDer pair $(\mathfrak{A},\delta,R)$. Then the linear term $(\mu_1,\delta_1,R_1)$ is a $2$-cocycle in the cohomology of the weighted Rota-Baxter AssDer pair $(\mathfrak{A},\delta,R)$ with coefficients in itself.
\end{prop}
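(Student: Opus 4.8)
The plan is to run the same argument as in Theorem~\ref{Theorem deformation}, now in the associative setting: extract the coefficient of $t^{1}$ in each of the three identities \eqref{eqt deformation Ass1}, \eqref{eqt deformation Ass2} and \eqref{eqt deformation Ass3} defining a formal deformation, and recognize the three resulting equations as the three components of the single equation $\mathfrak{D}_\mathrm{R.B.AssDer}((\mu_1,\delta_1),R_1)=0$ in $\mathfrak{C}^{3}_\mathrm{R.B.AssDer}(A,A)$, the coefficients being taken in the adjoint bimodule $(M,\delta_\mathrm{M},T)=(A,\delta,R)$.

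First I would use the $n=1$ parts of \eqref{eqt deformation Ass1}, \eqref{eqt deformation Ass2} and \eqref{eqt deformation Ass3}, which are exactly the computations displayed immediately before the statement: associativity of $\mu_\mathrm{t}$ to order $t$ gives $d_\mathrm{Hoch}(\mu_1)=0$; compatibility of $\delta_\mathrm{t}$ with $\mu_\mathrm{t}$ to order $t$ gives $d_\mathrm{Hoch}(\delta_1)+\partial\mu_1=0$; and the Rota-Baxter identity for $R_\mathrm{t}$ to order $t$ gives $d_\mathrm{R.Hoch}(R_1)+\Phi^{2}(\mu_1)=0$. The only thing requiring attention here is that each combination of $\mu,\delta,R,\mu_1,\delta_1,R_1$ produced by expanding these deformation equations matches, term by term and sign by sign, the operators $d_\mathrm{Hoch}$, $\partial$, $d_\mathrm{R.Hoch}$, $\Phi^{2}$ recalled in Section~\ref{section3}; in particular one checks that $\Phi^{2}(\mu_1)(x,y)=\mu_1(Rx,Ry)-\lambda T(\mu_1(x,y))-T(\mu_1(Rx,y))-T(\mu_1(x,Ry))$ is precisely the $\mu_1$-linear part of the $n=1$ Rota-Baxter relation. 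These are routine unwindings of definitions.

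Then I would substitute into the differential. Since $(\mu_1,\delta_1,R_1)$ lies in $\mathfrak{C}^{2}_\mathrm{R.B.AssDer}(A,A)=\bigl(C^{2}(A,A)\oplus C^{1}(A,A)\bigr)\oplus C^{1}(\mathfrak{A}_\mathrm{R},A_\mathrm{R})$, the definition of $\mathfrak{D}_\mathrm{R.B.AssDer}$ gives
\begin{equation*}
\mathfrak{D}_\mathrm{R.B.AssDer}\bigl((\mu_1,\delta_1),R_1\bigr)=\bigl(d_\mathrm{Hoch}(\mu_1),\; d_\mathrm{Hoch}(\delta_1)+(-1)^{2}\partial\mu_1,\; -d_\mathrm{R.Hoch}(R_1)-\Phi^{2}(\mu_1)\bigr).
\end{equation*}
The three identities of the previous paragraph say precisely that the first, the second (using $(-1)^{2}=1$) and the third slot all vanish, hence $\mathfrak{D}_\mathrm{R.B.AssDer}((\mu_1,\delta_1),R_1)=(0,0,0)$, i.e. $(\mu_1,\delta_1,R_1)$ is a $2$-cocycle of $(\mathfrak{A},\delta,R)$ with coefficients in itself, which is the claim.

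As in Theorem~\ref{Theorem deformation}, there is no genuine obstacle here beyond bookkeeping: the whole content sits in the three $n=1$ computations displayed before the statement, and the proof merely repackages them through the definition of $\mathfrak{D}_\mathrm{R.B.AssDer}$. The one delicate point is sign and normalization conventions --- confirming that $(-1)^{n}\partial$ enters the middle component with the same sign in which $\partial\mu_1$ was produced from \eqref{eqt deformation Ass2}, and that $-d_\mathrm{R.Hoch}(R_1)-\Phi^{2}(\mu_1)$ reproduces, up to an overall nonzero scalar if any, the $n=1$ instance of \eqref{eqt deformation Ass3}; once the conventions of Section~\ref{section3} are fixed, both checks are immediate.
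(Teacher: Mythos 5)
Your proposal is correct and follows exactly the route the paper takes: the paper's own ``proof'' consists precisely of the $n=1$ expansions of \eqref{eqt deformation Ass1}--\eqref{eqt deformation Ass3} displayed immediately before the statement, identified with the three components $d_\mathrm{Hoch}(\mu_1)=0$, $d_\mathrm{Hoch}(\delta_1)+\partial\mu_1=0$ and $-d_\mathrm{R.Hoch}(R_1)-\Phi^2(\mu_1)=0$ of $\mathfrak{D}_\mathrm{R.B.AssDer}((\mu_1,\delta_1),R_1)=0$. Your explicit formula for $\Phi^2(\mu_1)$ matches the definition in \eqref{cohomology of RBO} specialized to $n=2$, so no gap remains.
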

\begin{defn}
	Two formal deformations $(\mu_\mathrm{t},\delta_{\mathrm{t}},R_\mathrm{t})$ and $(\mu^{\prime}_\mathrm{t},\delta^\prime_{\mathrm{t}},R^\prime_\mathrm{t})$ of a weighted Rota-Baxter AssDer pair $(\mathfrak{A},\delta,R)$ are said to be  equivalent if there is a formal isomorphism 
	\begin{equation*}
		\varphi_\mathrm{t}=\displaystyle\sum_{i\geq0}\varphi_\mathrm{i}t^\mathrm{i}:A[[t]]\rightarrow A[[t]],\quad \text{where } \varphi_\mathrm{i}\in \mathrm{Hom}(L,L) \text{ and } \varphi_0=\mathrm{id}_L.
	\end{equation*}
	Such that the $\mathbb{K}[[t]]$-linear map $\varphi_\mathrm{t}$ is a morphism of Rota-Baxter LieDer pairs from $(A[[t]]^\prime,\delta^\prime_{\mathrm{t}},R^\prime_\mathrm{t})$ to $(A[[t]],\delta_{\mathrm{t}},R_\mathrm{t})$
\end{defn}
It means that the following identities holds 
\begin{equation*}
	\varphi_\mathrm{t}(\mu^{\prime}_\mathrm{t}(x,y))=\mu_\mathrm{t}(\varphi_\mathrm{t}(x),\varphi_\mathrm{t}(y)) \text{ and } \varphi_\mathrm{t}\circ \delta^\prime_{\mathrm{t}}=\delta_{\mathrm{t}} \circ \varphi_\mathrm{t} \text{ and } \varphi_\mathrm{t}\circ R^\prime_\mathrm{t}=R_\mathrm{t}\circ \varphi_\mathrm{t}.
\end{equation*}
It is equivalent to the followings 
\begin{eqnarray*}
	\displaystyle\sum_{i+j=n}\varphi_\mathrm{i}(\mu^\prime_\mathrm{j}(x,y))&=&\displaystyle\sum_{i+j+k=n}\mu_\mathrm{i}(\varphi_\mathrm{j}(x),\varphi_\mathrm{k}(y)),\\
	\displaystyle\sum_{i+j=n}\varphi_\mathrm{i}\circ \delta^\prime_{\mathrm{j}}&=&\displaystyle\sum_{i+j=n}\delta_\mathrm{i}\circ \varphi_\mathrm{j},\\
	\displaystyle\sum_{i+j=n}\varphi_\mathrm{i}\circ R^\prime_{\mathrm{j}}&=&\displaystyle\sum_{i+j=n}R_\mathrm{i}\circ \varphi_\mathrm{j}.
\end{eqnarray*}
For $n=0$ we have $\varphi_0=\mathrm{Id_\mathrm{A}}$ and for $n=1$ we obtain 
\begin{eqnarray}
	\varphi_1\circ \mu^\prime+\mu_1^\prime&=&\mu_1+\mu\circ (\varphi_1\otimes \mathrm{Id_A})+\mu\circ (\mathrm{Id_A}\otimes \varphi_1),\label{eqt deformation Ass4}\\
	\varphi_1\circ\delta^\prime+\delta^\prime_1&=&\delta_1+\delta\circ\varphi_1,\label{eqt deformation Ass5}\\
	\varphi_1\circ R^\prime+R^\prime_1&=&R_1+R\circ\varphi_1.\label{eqt deformation Ass6}
\end{eqnarray}
Then equations \eqref{eqt deformation Ass4},\eqref{eqt deformation Ass5} and \eqref{eqt deformation Ass6} we obtain that 
\begin{equation*}
     (\mu^\prime_1,\delta^\prime_1,R^\prime_1)-(\mu_1,\delta_1,R_1):=\mathfrak{D}_\mathrm{R.B.AssDer}(\varphi_1)
\end{equation*}
This leads us to the following result
\begin{thm}
	Tow formal $1$-parameter deformations of a weighted Rota-Baxter assDer pair $(\mathfrak{A},\delta,R)$ are cohomologous. Therefore, they correspond to the same cohomology class.
\end{thm}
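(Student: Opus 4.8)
The statement is to be read as: two \emph{equivalent} formal $1$-parameter deformations of $(\mathfrak{A},\delta,R)$ have infinitesimals representing the same class in $\mathcal{H}^2_{\mathrm{R.B.AssDer}}(A,A)$; this is precisely the situation set up by \eqref{eqt deformation Ass4}--\eqref{eqt deformation Ass6}. The plan is to take the equivalence isomorphism, extract the linear term of the three equivalence identities, and recognise the resulting relations as the three components of $\mathfrak{D}_{\mathrm{R.B.AssDer}}(\varphi_1)$; since each infinitesimal is a $2$-cocycle by the preceding proposition, their difference being a coboundary is exactly the assertion that they are cohomologous.

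First I would fix a formal isomorphism $\varphi_\mathrm{t}=\sum_{i\geq0}\varphi_\mathrm{i}t^\mathrm{i}$ realising the equivalence of $(\mu_\mathrm{t},\delta_\mathrm{t},R_\mathrm{t})$ with $(\mu^\prime_\mathrm{t},\delta^\prime_\mathrm{t},R^\prime_\mathrm{t})$, with $\varphi_0=\mathrm{Id}_\mathrm{A}$, and read off the coefficient of $t$ in the defining identities $\varphi_\mathrm{t}(\mu^\prime_\mathrm{t}(x,y))=\mu_\mathrm{t}(\varphi_\mathrm{t}(x),\varphi_\mathrm{t}(y))$, $\varphi_\mathrm{t}\circ\delta^\prime_\mathrm{t}=\delta_\mathrm{t}\circ\varphi_\mathrm{t}$, $\varphi_\mathrm{t}\circ R^\prime_\mathrm{t}=R_\mathrm{t}\circ\varphi_\mathrm{t}$. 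Using $\mu^\prime_0=\mu_0=\mu$, $\delta^\prime_0=\delta_0=\delta$, $R^\prime_0=R_0=R$ and $\varphi_0=\mathrm{Id}_\mathrm{A}$, these collapse to exactly \eqref{eqt deformation Ass4}, \eqref{eqt deformation Ass5} and \eqref{eqt deformation Ass6}.

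Next I would rewrite each of those three relations as an identity between a component of $(\mu^\prime_1,\delta^\prime_1,R^\prime_1)-(\mu_1,\delta_1,R_1)$ and the corresponding component of $\mathfrak{D}_{\mathrm{R.B.AssDer}}(\varphi_1)$. Concretely, \eqref{eqt deformation Ass4} gives $\mu^\prime_1-\mu_1=\mu\circ(\varphi_1\otimes\mathrm{Id_A})+\mu\circ(\mathrm{Id_A}\otimes\varphi_1)-\varphi_1\circ\mu=d_{\mathrm{Hoch}}(\varphi_1)$, the Hochschild differential of the $1$-cochain $\varphi_1$ in the adjoint bimodule, where $l=r=\mu$; \eqref{eqt deformation Ass5} gives $\delta^\prime_1-\delta_1=\delta\circ\varphi_1-\varphi_1\circ\delta=-\partial(\varphi_1)$ by the definition of $\partial$ on a $1$-cochain; and \eqref{eqt deformation Ass6} gives $R^\prime_1-R_1=R\circ\varphi_1-\varphi_1\circ R=-\Phi^1(\varphi_1)$, since for $n=1$ only the $k=0$ term of \eqref{cohomology of RBO} survives and $T=R$ in the adjoint representation, so $\Phi^1(\varphi_1)(x)=\varphi_1(Rx)-R(\varphi_1(x))$. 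Because $\mathfrak{D}_{\mathrm{R.B.AssDer}}$ acts on a $1$-cochain $f\in\mathfrak{C}^1_{\mathrm{R.B.AssDer}}(A,A)=\mathrm{Hom}(A,A)$ by $\mathfrak{D}_{\mathrm{R.B.AssDer}}(f)=(d_{\mathrm{Hoch}}(f),-\partial f,-\Phi^1(f))$ (in analogy with the LieDer case used in the proof of Theorem \ref{Theorem deformation}), this yields $(\mu^\prime_1,\delta^\prime_1,R^\prime_1)-(\mu_1,\delta_1,R_1)=\mathfrak{D}_{\mathrm{R.B.AssDer}}(\varphi_1)\in\mathcal{B}^2_{\mathrm{R.B.AssDer}}(A,A)$, whence the two cocycles are cohomologous.

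I expect the only delicate point to be the bookkeeping of signs and of the $\Phi$-term: one must confirm that the $n=1$ instance of $\Phi^{\bullet}$ reduces to $f\mapsto f\circ R-R\circ f$ and that $\mathfrak{D}_{\mathrm{R.B.AssDer}}$ on $\mathrm{Hom}(A,A)$ has precisely the three-component form above with signs matching \eqref{eqt deformation Ass4}--\eqref{eqt deformation Ass6}; everything else is a routine extraction of the linear term. It is also worth remarking explicitly that $\varphi_1\in\mathrm{Hom}(A,A)$ is a legitimate $1$-cochain, so that $\mathfrak{D}_{\mathrm{R.B.AssDer}}(\varphi_1)$ is defined and lies in $\mathcal{B}^2_{\mathrm{R.B.AssDer}}(A,A)$, which is what makes the final conclusion meaningful.
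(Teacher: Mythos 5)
Your proposal is correct and follows essentially the same route as the paper: the paper's argument is precisely the computation preceding the theorem statement, namely extracting the coefficient of $t$ from the equivalence identities to obtain \eqref{eqt deformation Ass4}--\eqref{eqt deformation Ass6} and recognising the result as $(\mu^\prime_1,\delta^\prime_1,R^\prime_1)-(\mu_1,\delta_1,R_1)=\mathfrak{D}_\mathrm{R.B.AssDer}(\varphi_1)$. Your additional verification of the signs and of the $n=1$ instance of $\Phi$ only makes explicit what the paper leaves implicit.
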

\begin{defn}
	A formal deformation $(\mu_\mathrm{t},\delta_\mathrm{t},R_\mathrm{t})$ of a weighted Rota-Baxter AssDer pair $(\mathfrak{A},\delta,R)$ is said trivial if it is equivalent to $(\mu^\prime_\mathrm{t}=\mu;\delta^\prime_\mathrm{t}=\delta)$.
\end{defn}
\begin{thm}
	If $\mathcal{H}^2_\mathrm{R.B.AssDer}(A,A)=0$ then every formal deformation of the weighted Rota-Baxter AssDer pair $(\mathfrak{A},\delta,R)$ is trivial.
\end{thm}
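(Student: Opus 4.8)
The plan is to run the classical Gerstenhaber-type rigidity argument inside the cochain complex $(\mathfrak{C}^\star_{\mathrm{R.B.AssDer}}(A,A),\mathfrak{D}_{\mathrm{R.B.AssDer}})$, exploiting the vanishing of its second cohomology. Let $(\mu_{\mathrm{t}},\delta_{\mathrm{t}},R_{\mathrm{t}})$ be a formal $1$-parameter deformation of $(\mathfrak{A},\delta,R)$. By the preceding Proposition its linear term $(\mu_1,\delta_1,R_1)$ is a $2$-cocycle, i.e. $\mathfrak{D}_{\mathrm{R.B.AssDer}}((\mu_1,\delta_1),R_1)=0$. Since $\mathcal{H}^2_{\mathrm{R.B.AssDer}}(A,A)=0$, there is $\varphi_1\in\mathrm{Hom}(A,A)$ with $(\mu_1,\delta_1,R_1)=\mathfrak{D}_{\mathrm{R.B.AssDer}}(\varphi_1)$ (up to sign).

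Next I would form the formal automorphism $\varphi_{\mathrm{t}}=\mathrm{id}_A+\varphi_1 t$ of $A[[t]]$ (invertible over $\mathbb{K}[[t]]$ since its constant term is the identity) and push the deformation forward along it, setting
\[
\overline{\mu_{\mathrm{t}}}=\varphi_{\mathrm{t}}^{-1}\circ\mu_{\mathrm{t}}\circ(\varphi_{\mathrm{t}}\otimes\varphi_{\mathrm{t}}),\quad \overline{\delta_{\mathrm{t}}}=\varphi_{\mathrm{t}}^{-1}\circ\delta_{\mathrm{t}}\circ\varphi_{\mathrm{t}},\quad \overline{R_{\mathrm{t}}}=\varphi_{\mathrm{t}}^{-1}\circ R_{\mathrm{t}}\circ\varphi_{\mathrm{t}}.
\]
By construction this is a deformation of $(\mathfrak{A},\delta,R)$ equivalent to the original one, and by the same computation that precedes the statement that two formal deformations are cohomologous, its linear term equals $(\mu_1,\delta_1,R_1)-\mathfrak{D}_{\mathrm{R.B.AssDer}}(\varphi_1)=0$. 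Hence $\overline{\mu_{\mathrm{t}}}=\mu+\sum_{i\geq2}\overline{\mu_i}t^i$, and similarly for $\overline{\delta_{\mathrm{t}}}$ and $\overline{R_{\mathrm{t}}}$.

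The argument is then iterated. Assume inductively that, after finitely many such equivalences, the deformation agrees with the trivial one up to order $p-1$, i.e. $\mu_{\mathrm{t}}=\mu+\mu_p t^p+\cdots$, $\delta_{\mathrm{t}}=\delta+\delta_p t^p+\cdots$, $R_{\mathrm{t}}=R+R_p t^p+\cdots$. Extracting the coefficient of $t^p$ in \eqref{eqt deformation Ass1}, \eqref{eqt deformation Ass2}, \eqref{eqt deformation Ass3} and using that $\mu_1=\cdots=\mu_{p-1}=0$ (and likewise for $\delta$ and $R$), all the ``mixed'' terms drop out, leaving exactly $\mathfrak{D}_{\mathrm{R.B.AssDer}}((\mu_p,\delta_p),R_p)=0$; so $(\mu_p,\delta_p,R_p)$ is a $2$-cocycle. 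Writing it as a coboundary $\mathfrak{D}_{\mathrm{R.B.AssDer}}(\varphi_p)$ and conjugating by $\mathrm{id}_A+\varphi_p t^p$ produces an equivalent deformation agreeing with the trivial one up to order $t^p$. Each step raises the order of agreement by one, so in the $t$-adic topology the infinite composite $\cdots\circ(\mathrm{id}_A+\varphi_2 t^2)\circ(\mathrm{id}_A+\varphi_1 t)$ converges to a formal isomorphism carrying $(\mu_{\mathrm{t}},\delta_{\mathrm{t}},R_{\mathrm{t}})$ to $(\mu,\delta,R)$; thus the deformation is trivial.

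The main obstacle is the inductive step: checking that, when the first $p-1$ correction terms vanish, the order-$t^p$ parts of \eqref{eqt deformation Ass1}--\eqref{eqt deformation Ass3} collapse to the single equation $\mathfrak{D}_{\mathrm{R.B.AssDer}}((\mu_p,\delta_p),R_p)=0$. This is the $n=p$ analogue of the $n=1$ computation already carried out before the ``$2$-cocycle'' Proposition (which yielded $d_{\mathrm{Hoch}}(\mu_1)=0$, $d_{\mathrm{Hoch}}(\delta_1)+\partial\mu_1=0$ and $-d_{\mathrm{R.Hoch}}(R_1)-\Phi^2(\mu_1)=0$), the point being that every summand involving a lower-order coefficient vanishes; some care is needed with the triple-index sum in \eqref{eqt deformation Ass3} and with the explicit shape of $\Phi^p$. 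Everything else — invertibility of $\varphi_{\mathrm{t}}$, equivalence of the conjugated deformation, and $t$-adic convergence of the composite — is routine bookkeeping.
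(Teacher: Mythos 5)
Your argument is correct and is essentially the argument the paper intends: it is the verbatim analogue of the paper's proof of the corresponding rigidity theorem for weighted Rota-Baxter LieDer pairs (take the linear term, use vanishing of $\mathcal{H}^2$ to write it as $\mathfrak{D}_{\mathrm{R.B.AssDer}}(\varphi_1)$, conjugate by $\mathrm{id}_A+\varphi_1 t$ to kill it, and iterate), while the paper itself states the AssDer version without proof. Your version is in fact more careful than the paper's Lie-case proof, since you explicitly flag the inductive step that the order-$t^p$ coefficients of \eqref{eqt deformation Ass1}--\eqref{eqt deformation Ass3} reduce to the $2$-cocycle condition once the lower-order terms vanish, which is exactly the point the paper glosses over with ``repeating the same argument.''
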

 	\noindent {\bf Acknowledgment:}
	The authors would like to thank the referee for valuable comments and suggestions on this article. 

\end{document}